\documentclass[11pt]{article}
\usepackage{amssymb}

\usepackage{amsmath}


\setcounter{MaxMatrixCols}{10}
\textwidth 16cm
\textheight 22cm
\oddsidemargin -.25in
\evensidemargin -.25in
\topskip 0cm
\footskip 1cm
\newtheorem{theorem}{Theorem}

\newtheorem{definition}[theorem]{Definition}

\newtheorem{lemma}[theorem]{Lemma}

\newtheorem{proposition}[theorem]{Proposition}
\newtheorem{remark}[theorem]{Remark}

\newenvironment{proof}[1][Proof]{\textbf{#1.} }{\ \rule{0.5em}{0.5em}}
\newdimen\dummy
\dummy=\oddsidemargin
\addtolength{\dummy}{72pt}
\marginparwidth=.5\dummy
\marginparsep=.1\dummy

\begin{document}

\title{Ergodicity and Gaussianity for Spherical Random Fields\thanks{%
We are grateful to Mirko D'Ovidio for some useful comments on an earlier
version.}}
\author{Domenico Marinucci and Giovanni Peccati \\
{\small Dipartimento di Matematica, Universit\`{a} di Roma Tor Vergata} \\
{\small Laboratoire MODAL\ X, Universit\`{e} de Paris X}}
\maketitle

\begin{abstract}
We investigate the relationship between ergodicity and asymptotic
Gaussianity of isotropic spherical random fields, in the high-resolution (or
high-frequency) limit. In particular, our results suggest that under a wide
variety of circumstances the two conditions are equivalent, i.e. the sample
angular power spectrum may converge to the population value if and only if
the underlying field is asymptotically Gaussian, in the high frequency
sense. These findings may shed some light on the role of \textit{Cosmic
Variance }in Cosmic Microwave Background (CMB) radiation data analysis.

\begin{itemize}
\item Keywords and Phrases: Spherical Random Fields, High-Frequency
Gaussianity, High-Frequency Ergodicity, Gaussian Subordination,
Clebsch-Gordan Coefficients, Cosmic Variance

\item PACS: 02.50-r, 98.70-Vc, 98.80-k

\item AMS\ Classification: 60G60; 60G15, 60K40, 62M15, 42C10
\end{itemize}
\end{abstract}

\section{Introduction and background}

\subsection{Overview}

The usual framework for proving asymptotic results in probability (for
instance, central limit theorems or laws of large numbers) lies within the
so-called \textsl{large sample paradigm}, according to which more and more
(independent or weakly dependent) random variables are generated, and the
limiting behaviour of some functionals of these variables (e.g., averages or
empirical moments) is studied.

Physical applications, however, are prompting the development of a
stochastic asymptotic theory of a rather different nature, where the
indefinite repetition of a single experience is no longer available, and one
relies instead on observations of the same (fixed) phenomenon with higher
and higher degrees of resolution.

One crucial instance of this situation appears when dealing with the
statistical analysis of random fields indexed by compact manifolds, the
quintessential example being provided by the case of the sphere $S^{2}$.
Indeed, we are especially concerned with issues arising from the analysis of
the Cosmic Microwave Background (CMB) radiation, a theme which is currently
at the core of physical and cosmological research, see for instance \cite%
{DOD, KOT} for textbook references, and \cite{WMAP1,WMAP2,WMAP3} for further
discussions around the latest experimental data.

It is well-known that the CMB\ is a relic electromagnetic radiation
providing a snapshot of the Universe at the so-called \textsl{age of
recombination}, i.e. at the era when electrons in the primordial fluid
arising from the Big Bang were captured by protons to form stable hydrogen
atoms. Since the cross-section of hydrogen atoms is much smaller than for
free electrons, after recombination photons can be viewed as diffusing
freely across the Universe (to first order approximations). According to the
latest experimental evidence, this has occurred some $3.7\times 10^{5}$
years after the Big Bang, i.e. 13.7 billion years from the current epoch.
Several experiments have been devoted to collecting extremely refined
observations of the CMB, the leading role being played by the currently
ongoing NASA mission WMAP (launched in 2001, see \texttt{%
http://map.gsfc.nasa.gov/}) and the ESA mission Planck, which is just now
starting to operate after the launch on May 14, 2009 (see \texttt{%
http://www.sciops.esa.int/}).

From a mathematical point of view, the CMB can be regarded as a single
realization of an isotropic, zero-mean, finite variance spherical random
field, for which the following spectral representation holds (see e.g. \cite%
{ADT} or \cite{LEO})%
\begin{equation}
T(x)=\sum_{l=0}^{\infty }\sum_{m=-l}^{l}a_{lm}Y_{lm}(x)\text{ , }x\in S^{2}%
\text{ .}  \label{basfor}
\end{equation}%
Here, the collection%
\begin{equation*}
\left\{ Y_{lm}:l\geq 0,\text{\ }m=-l,...,l\right\}
\end{equation*}%
stands for the usual triangular array of spherical harmonics, which are
well-known to provide a complete orthonormal system for the $L^{2}(S^{2})$
space of square-integrable functions (with respect to Lebesgue measure)\ on
the sphere -- see \cite{STW,VMK,VIK}. In a loose sense, we can say that the
frequency parameter $l$ is related to a characteristic angular scale, say $%
\vartheta _{l}$, according to the relationship $\vartheta _{l}\simeq \pi /l$%
. The (random) triangular array of spherical harmonic coefficients $\left\{
a_{lm}:l\geq 0,\text{ \ }m=-l,...,l\right\} $ are such that $Ea_{lm}=0$ and $%
Ea_{lm}\overline{a}_{l^{\prime }m^{\prime }}=C_{l}\delta _{l}^{l^{\prime
}}\delta _{m}^{m^{\prime }}$, the bar denoting complex-conjugation and $%
\delta _{a}^{b}$ indicating the Kronecker delta function. The non-negative
sequence $\left\{ C_{l}:l\geq 0\right\} $ (not depending on $m$ -- see \cite%
{MPJMVA} as well as the forthcoming section) is the \textsl{angular power
spectrum} of the spherical field (see for instance \cite{BaMa, BaMaVa}).

As recalled above, our work deals with asymptotic issues, where the
expression ``asymptotic''\ has to be understood in the \textsl{%
high-resolution} (or \textsl{high-frequency}) sense. This means that we
focus on the behaviour of the Fourier components%
\begin{equation}
T_{l}\left( x\right) :=\sum_{m=-l}^{l}a_{lm}Y_{lm}\left( x\right) \text{, \
\ }x\in S^{2}\text{, }l\geq 0\text{,}  \label{frequency compnts}
\end{equation}%
associated with a fixed spherical field, as the frequency $l$ grows larger
and larger (plainly, each $T_{l}$ is the projection of the field $T$ into
the orthogonal subspace of $L^{2}\left( S^{2}\right) $ spanned by the
spherical harmonics $\left\{ Y_{lm}:m=-l,...,l\right\} $). Note that this is
the typical framework faced by experimentalists handling satellite missions
as those mentioned above. Indeed, these missions are observing the same
(unique) realization of our Universe on the so-called\textsl{\ last
scattering surface}; more recent and more sophisticated experiments are then
characterized by higher and higher frequencies (smaller and smaller scales)
being observed. For instance, for the pioneering CMB mission COBE in
1989-1992 (which led to the Nobel Prize for Smoot and Mather in 2006) only
frequencies in the order of a few dozens were recorded (i.e., scales of
several degrees), a limit which was raised to few hundreds by WMAP (i.e.,
approximately a quarter of degree) and is expected to grow to a few
thousands with Planck (i.e., a few arcminutes).

The principal goal of this paper is to enlighten some partial new
connections between two high-resolution characterizations of spherical
fields, that is, \textsl{ergodicity }and \textsl{asymptotic Gaussianity}.
Roughly speaking (formal details are given in the forthcoming Sections 1.2
and 1.3), one says that the spherical field $T$ is ergodic if the empirical
version of the power spectrum of $T$ (see formula (\ref{emprical PS}) below)
can be used as a consistent estimator of the sequence $\left\{ C_{l}\right\}
$ (at least for high values of $l$). On the other hand, we say that $T$ is
asymptotically Gaussian, whenever suitably normalized versions of the
frequency components of $T_{l}$ exhibit Gaussian fluctuations for high
values of $l$. As discussed below, these two notions are tightly connected
whenever one deals with fields having an \textsl{isotropic} (or,
equivalently, \textsl{rotationally-invariant}) law.

\bigskip

\noindent%
\textbf{Remark. }For the rest of the paper, every random object is defined
on a suitable (common) probability space $\left( \Omega ,\mathcal{F}%
,P\right) .$

\subsection{High-frequency ergodicity}

In what follows, we shall consider a real-valued random field $T=\left\{
T\left( x\right) :x\in S^{2}\right\} $ indexed by the sphere $S^{2}$. The
random field $T$ satisfies the following basic assumptions: (\textbf{i}) the
law of $T$ is \textsl{isotropic}, that is, $T$ has the same law as $x\mapsto
T\left( gx\right) $ for every rotation $g\in SO\left( 3\right) $ (here, we
select the canonical action of $SO\left( 3\right) $ on $S^{2}$); (\textbf{ii}%
) $T$ is square-integrable and centered. Under assumptions (\textbf{i})-(%
\textbf{ii}), the harmonic expansion (\ref{basfor}) takes place, both in $%
L^{2}\left( P\right) $ (for fixed $x$) and in the product space $L^{2}\left(
\Omega \times S^{2},P\otimes d\lambda \right) $, where $\lambda $ stands for
the Lebesgue measure. Note that the last claim hinges on the fact that one
can regard $T$ as an application of the type $T:\Omega \times
S^{2}\rightarrow \mathbb{R}:\left( \omega ,x\right) \mapsto T\left( \omega
,x\right) $. As anticipated in the previous section, another useful property
of $T$ (easily deduced from isotropy -- see e.g. \cite{MPJMVA}) is that the
harmonic coefficients $a_{lm}$ are such that the \textsl{power spectrum }%
associated with $T$, defined as the collection $\left\{
C_{l}:l=0,1,...\right\} $ (with $C_{l}=E\left\vert a_{lm}\right\vert ^{2}$),
depends uniquely on the frequency indices $l$.

In physical experiments (for instance, when measuring the CMB\ radiation),
the power spectrum of a given spherical field is usually unknown. For this
reason, a key role is played by its empirical counterpart (called the\textsl{%
\ empirical power spectrum }-- see for instance \cite{efst},\cite{polenta}),
which is given by

\begin{equation}
\widehat{C}_{l}=\frac{1}{2l+1}\sum_{m=-l}^{l}|a_{lm}|^{2}\text{ , \ }%
l=0,1,2,...  \label{emprical PS}
\end{equation}%
An important issue to be addressed is therefore to establish conditions
under which the distance between the quantities $\widehat{C}_{l}$ and $C_{l}$
converges to zero (in a sense that is defined below) when $l\rightarrow
\infty $, that is, when higher and higher frequencies of the expansion (\ref%
{basfor}) are available to the observer$.$ Although the asymptotic behaviour
of spectrum estimators has been very deeply investigated for stochastic
processes in Euclidean domains and under large sample asymptotics (see for
instance \cite{BRD,LEO,YAD}), only basic results are known in the
high-resolution setting.

For instance, it is immediate that the finite variance of $T$ entails that,
for every $x\in S^{2}$,
\begin{equation*}
ET\left( x\right) ^{2}=\sum_{l\geq 0}\frac{(2l+1)}{4\pi }C_{l}<\infty \text{,%
}
\end{equation*}%
from which one deduces that $C_{l}\rightarrow 0$ and also
\begin{equation*}
\sum_{l\geq 0}E\widehat{C}_{l}=\sum_{l\geq 0}C_{l}<\infty \text{.}
\end{equation*}%
By reasoning as in the proof of the Borel-Cantelli Lemma we therefore infer
that, for any $\varepsilon >0$,
\begin{equation}
P\left\{ \lim_{l \rightarrow \infty }\sup \widehat{C}_{l}>\varepsilon
\right\}  \leq \lim_{l\rightarrow \infty }\sum_{\ell \geq l}P\left\{
\widehat{C}_{\ell }\geq \varepsilon \right\}  \label{yolatengo}
 \leq \lim_{l\rightarrow \infty }\frac{1}{\varepsilon }\sum_{\ell \geq
l}C_{\ell }=0,
\end{equation}
yielding in turn that both $\widehat{C}_{l}$ and $|\widehat{C}_{l}-C_{l}|$
almost surely converge to zero as $l\rightarrow \infty $. Plainly, since
this result does not provide any information about the magnitude of the
ratio $|\widehat{C}_{l}-C_{l}|/C_{l}$, it is virtually useless for
statistical applications. In particular, one cannot conclude from (\ref%
{yolatengo}) that the estimation of $C_{l}$ based on $\widehat{C}_{l}$ is
\textsl{consistent} in a satisfactory statistical sense.

Starting from these considerations, one sees that it is indeed necessary to
focus on\textsl{\ normalized} quantities, such as the sequence
\begin{equation}
\widetilde{C}_{l}=\frac{1}{2l+1}\sum_{m=-l}^{l}\frac{|a_{lm}|^{2}}{C_{l}}=%
\frac{\widehat{C}_{l}}{C_{l}},\text{ \ \ }l\geq 0\text{.}  \label{CTILDA}
\end{equation}%
Note that $E\widetilde{C}_{l}=1$, and also that the coefficient $\widetilde{C%
}_{l}$ is not observable (whereas $\widehat{C}_{l}$ is). The sequence $%
\left\{ \widetilde{C}_{l}:l\geq 0\right\} $ can be used in order to
meaningfully evaluate the asymptotic performance of any statistical
procedure based on $\widehat{C}_{l}$. The following definition uses the
coefficients $\widetilde{C}_{l}$ in order to define ergodicity.

\begin{definition}[HFE]
\label{D : HFE}Let $T$ be an isotropic, finite variance spherical random
field with angular power spectrum $\left\{ C_{l}:l\geq 0\right\} .$ We shall
say that $T$ is \textbf{High-Frequency Ergodic} (\textbf{HFE} -- or ergodic
in the high-frequency sense) if and only if%
\begin{equation}
\lim_{l\rightarrow \infty }E\left\{ \widetilde{C}_{l}-1\right\}
^{2}=\lim_{l\rightarrow \infty }E\left\{ \frac{\widehat{C}_{l}}{C_{l}}%
-1\right\} ^{2}=0\text{.}  \label{msc}
\end{equation}
\end{definition}

Condition (\ref{msc}) implies of course that $\widetilde{C}_{l}=\widehat{C}%
_{l}/C_{l}$ converges in probability towards the constant $1$.

\bigskip

\noindent \textbf{Remark. }In some sense, the term \textquotedblleft
high-frequency consistency\textquotedblright\ seems to better describe
property (\ref{msc}). However, in the statistical literature consistency is
usually viewed as a property of a sequence of estimators, whereas here we
deal with a property of the field $T$, so that we find the term ergodicity
more suitable. Another way of formulating this point is to say that (\ref%
{msc}) characterizes the ergodicity of the \textquotedblleft normalized
empirical spectral measure\textquotedblright\ $\left\{ \widetilde{C}%
_{l}:l\geq 0\right\} $, as $l$ diverges.

\subsection{Ergodicity of Gaussian fields (and associated Gaussian
fluctuations)}

As an illustration (and for future reference) we now test Definition \ref{D
: HFE} under the additional assumption that $T$ is Gaussian. In this case,
it is readily seen that, for every $l\geq 1$, the components of the vector $%
\left\{ a_{lm}:m=1,...,l\right\} $ are complex-valued and independent.
Moreover, the random quantities $a_{l0}/\sqrt{C_{l}}$, $\sqrt{2}{\rm Re}\,
(a_{lm})/\sqrt{C_{l}}$ and $\sqrt{2}{\rm Im}\,(a_{lm})/\sqrt{C_{l}}$ ($%
m=0,...,l$) are independent and identically distributed $N(0,1)$ random
variables (these facts are well-known, see e.g. \cite{BaMa},\cite{MPJMVA}
and the references therein). It is now easy to prove that%
\begin{equation}
\widetilde{C}_{l}=\frac{1}{2l+1}\sum_{m=-l}^{l}\frac{|a_{lm}|^{2}}{C_{l}}%
\rightarrow 1,  \label{LLN}
\end{equation}%
in every norm $L^{p}$, $p\geq 1$. Indeed, since $a_{l0}^{2}/C_{l}\overset{law%
}{\sim }\chi _{1}^{2}$ and the set $\left\{
2a_{lm}^{2}/C_{l}:m=1,...,l\right\} $ is composed of i.i.d. $\chi _{2}^{2}$
random variables independent of $a_{l0}$ (here, $\chi _{n}^{2}$ denotes a
standard chi-square distribution with $n$ degrees of freedom),
\begin{eqnarray*}
E\left\{ \widetilde{C}_{l}-1\right\} ^{2} &=&\frac{1}{(2l+1)^{2}}E\left[
\frac{a_{l0}^{2}}{C_{l}}-1+2\left\{ \sum_{m=1}^{l}\frac{|a_{lm}|^{2}}{C_{l}}
-1\right\} \right] ^{2} \\
&=&\frac{2}{2l+1}\underset{l\rightarrow \infty }{\longrightarrow }0,
\end{eqnarray*}
and one can use the fact that, for polynomial functionals of a Gaussian
field of fixed degree, all $L^{p}$ topologies coincide.

We shall now provide (see the forthcoming Proposition \ref{T : lodoalfano})
a CLT that is naturally associated with the convergence described in (\ref%
{LLN}). Note that, instead of using the classic Berry-Esseen results (see
e.g. Feller \cite{W. FELLER II}), we rather apply some recent estimates
(proved in \cite{Nourpec} and \cite{NourPecRein} by means of
infinite-dimensional Gaussian analysis and the so-called ``Stein's method''\
for probabilistic approximations) allowing to compare, for fixed $l$, the
\textsl{total variation distance }between the law of the normalized random
variable%
\begin{equation*}
\sqrt{\frac{2l+1}{2}}\left\{ \frac{\widehat{C}_{l}}{C_{l}}-1\right\} =\sqrt{%
\frac{2l+1}{2}}\left\{ \widetilde{C}_{l}-1\right\} ,
\end{equation*}%
and that of a standard Gaussian random variable. Recall that the total
variation distance between the laws of two real-valued random variables $X$
and $Y$ is given by
\begin{equation*}
d_{TV}\left( X,Y\right) =\sup_{A}\left| P\left( X\in A\right) -P\left( Y\in
A\right) \right| \text{,}
\end{equation*}%
where the supremum runs over all Borel sets $A$.

\begin{proposition}
\label{T : lodoalfano}Let $N\left( 0,1\right) $ denote a centered standard
Gaussian random variable. Then, for all $l\geq 0$ we have%
\begin{equation}
d_{TV}\left( \sqrt{\frac{2l+1}{2}}\left\{ \frac{\widehat{C}_{l}}{C_{l}}%
-1\right\} ,\text{ }N(0,1)\right) \leq \sqrt{\frac{8}{2l+1}}\text{,}
\label{brunetta}
\end{equation}%
so that, in particular, as $l\rightarrow \infty $,%
\begin{equation}
\sqrt{\frac{2l+1}{2}}\left\{ \widetilde{C}_{l}-1\right\} \overset{law}{%
\rightarrow }N\left( 0,1\right) .  \label{sacconi}
\end{equation}
\end{proposition}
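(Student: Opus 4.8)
The plan is to exploit the explicit chi-square structure of the normalized coefficients laid out just before the statement. Writing $X_l := \sqrt{(2l+1)/2}\,\{\widetilde{C}_l - 1\}$, I observe that $(2l+1)\widetilde{C}_l = a_{l0}^2/C_l + 2\sum_{m=1}^l |a_{lm}|^2/C_l$ is a sum of $2l+1$ independent chi-square summands (one $\chi_1^2$ and $2l$ terms of the form $(\sqrt 2\,\mathrm{Re}\,a_{lm}/\sqrt{C_l})^2$ and $(\sqrt 2\,\mathrm{Im}\,a_{lm}/\sqrt{C_l})^2$, each a squared standard Gaussian). Equivalently, $X_l$ is a normalized sum of squares of $n := 2l+1$ i.i.d.\ $N(0,1)$ variables, i.e.\ a recentered and rescaled $\chi_n^2$ variable. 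The whole problem thus reduces to a total-variation bound between a normalized $\chi_n^2$ and $N(0,1)$, with $n = 2l+1$.

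The main tool, as the excerpt signals, will be the Nourdin--Peccati/Stein-method estimates of \cite{Nourpec,NourPecRein}. The standard route is to represent $X_l$ as a second-chaos random variable with respect to an underlying Gaussian field: since each summand is a squared Gaussian, $X_l$ lies in the second Wiener chaos generated by the $2l+1$ independent Gaussians above. For such a centered second-chaos variable $F$ with $\mathrm{Var}(F)=1$, the Stein/Malliavin bound gives $d_{TV}(F, N(0,1)) \le C\,\sqrt{\mathrm{Var}(\tfrac12 \|DF\|_H^2)}$, and for a quadratic form this variance is controlled by the fourth cumulant. First I would verify the normalization, namely that $\mathrm{Var}(X_l)=1$: each $\chi_1^2$ summand has variance $2$, there are $2l+1$ of them, so $\mathrm{Var}((2l+1)\widetilde{C}_l) = 2(2l+1)$, giving $\mathrm{Var}(\widetilde{C}_l) = 2/(2l+1)$ and hence $\mathrm{Var}(X_l) = 1$, confirming the factor $\sqrt{(2l+1)/2}$ is exactly the right scaling.

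Next I would compute the relevant fourth-moment/cumulant quantity. For a normalized sum $S_n = \frac{1}{\sqrt{2n}}\sum_{i=1}^n (Z_i^2 - 1)$ of $n$ i.i.d.\ centered $\chi_1^2$ terms, the fourth cumulant scales like $1/n$: explicitly, $\kappa_4(S_n) = \kappa_4(\chi_1^2)/(n \cdot (2)^2)\cdot n = \kappa_4(Z^2)/(2n)$ up to the normalization, and since $\kappa_4(Z^2)=12$ for a standard Gaussian, one gets a quantity of order $1/n = 1/(2l+1)$. Feeding this into the second-chaos Stein bound produces $d_{TV}(X_l, N(0,1)) \le c/\sqrt{2l+1}$, and tracking the constants carefully should yield exactly the claimed $\sqrt{8/(2l+1)}$. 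The convergence \eqref{sacconi} then follows immediately, since total-variation convergence to $N(0,1)$ implies convergence in law.

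I expect the main obstacle to be nailing the explicit constant $\sqrt 8$ rather than merely the $O((2l+1)^{-1/2})$ rate. This requires using the precise form of the Nourdin--Peccati bound for a single (non-isonormal) second-chaos element and matching the kernel/contraction norms exactly, being careful about the mismatch between the $\chi_1^2$ coefficient ($m=0$ term) and the $\chi_2^2$ coefficients ($m\ge 1$ terms), which are weighted slightly differently in $\widetilde{C}_l$. One must check that this asymmetry does not spoil the clean i.i.d.\ reduction; it does not, because each $\chi_2^2$ term is itself a sum of two independent squared Gaussians, so the whole expression remains a sum of $2l+1$ i.i.d.\ squared standard Gaussians and the fourth-cumulant computation goes through verbatim. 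The rest of the argument is routine chaos/cumulant bookkeeping.
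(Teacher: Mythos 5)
Your proposal is correct and follows essentially the same route as the paper's own proof: rewrite $\sqrt{(2l+1)/2}\,\{\widehat{C}_l/C_l-1\}$ as a normalized sum of $2l+1$ i.i.d.\ squared standard Gaussians (splitting each $\chi^2_2$ term into two squared Gaussians, exactly as you note), compute the fourth cumulant $\mathrm{cum}_4 = 12/(2l+1)$, and apply the Nourdin--Peccati--Reinert second-chaos bound $d_{TV}(F_q,N(0,1))\leq 2\sqrt{(q-1)/(3q)}\,\sqrt{\mathrm{cum}_4(F_q)}$ with $q=2$, which gives the constant $\sqrt{8/(2l+1)}$ with no further work. The only slip is your intermediate claim that $\kappa_4(Z^2)=12$ --- in fact $\kappa_4(Z^2)=48$, and $12$ is the fourth cumulant of the standardized term $(Z^2-1)/\sqrt{2}$ --- but since you defer exact constant-tracking and the per-term normalization yields $12/(2l+1)$ either way, this does not affect the argument.
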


\begin{proof}
We have%
\begin{eqnarray*}
\sqrt{\frac{2l+1}{2}}\left\{ \frac{\widehat{C}_{l}}{C_{l}}-1\right\} &=&%
\frac{1}{\sqrt{2(2l+1)}}\left\{ \frac{a_{l0}^{2}}{C_{l}}+\sum_{m=1}^{l}2%
\frac{\left\{{\rm Re}\,a_{lm}\right\} ^{2}+\left\{{\rm Im}\,a_{lm}\right\}
^{2}}{C_{l}}-(2l+1)\right\} \\
&=&\frac{1}{\sqrt{(2l+1)}}\left\{ \sum_{m=1}^{2l+1}\frac{(x_{lm}^{2}-1)}{%
\sqrt{2}}\right\} \text{ ,}
\end{eqnarray*}%
where $\left\{ x_{lm}\right\} $ are a triangular array of $i.i.d.$ standard
Gaussian random variables. Standard calculations yield that%
\begin{equation*}
cum_{4}\left\{ \frac{\sqrt{2}}{\sqrt{(2l+1)}}\left[ \sum_{m=1}^{2l+1}\frac{%
(x_{lm}^{2}-1)}{2}\right] \right\} =\frac{12}{2l+1}\text{,}
\end{equation*}%
where $cum_{j}$ stands for the $j$th cumulant. Now recall that in \cite%
{NourPecRein} it is proved that, for every zero mean and unit variance
random variable $F_{q}$ that belongs to the $q$th Wiener chaos associated
with some Gaussian field ($q\geq 2$), the following inequality holds:%
\begin{equation*}
d_{TV}(F_{q},N(0,1))\leq 2\sqrt{\frac{q-1}{3q}}\sqrt{cum_{4}\left(
F_{q}\right) }\text{ .}
\end{equation*}%
The result now follows immediately, since each variable $\sqrt{\frac{2l+1}{2}%
}\left\{ \frac{\widehat{C}_{l}}{C_{l}}-1\right\} $ has unit variance, and is
precisely an element of the second Wiener chaos associated with $T.$
\end{proof}

\bigskip

It is simple to verify numerically that the convergence (\ref{sacconi})
takes place rather fast. For instance, for $l=100$, the bound in total
variation is of the order of $2\%$, while for $l=1000$ we deduce an order of
$0.6\%$.

We stress that the previous results heavily rely on the Gaussian assumption,
and cannot be easily extended to the framework of non-Gaussian and isotropic
spherical fields. The main reason supporting this claim is contained in the
references \cite{BaMa,BaMaVa}, where it\ is shown that, under isotropy, the
coefficients $a_{lm}$ are independent if and only if the underlying field is
Gaussian, and this despite the fact that they are always uncorrelated by
construction. In other words, sampling independent, non-Gaussian random
coefficients to generate maps according to (\ref{basfor}) will always yield
an anisotropic random field. The dependence structure among the coefficients
$\left\{ a_{lm}\right\} $ is in general quite complicated, albeit it can be
neatly characterized in terms of the group representation properties of $%
SO(3)$ (see \cite{MPBER} and \cite{MPJMVA}). In view of this, to derive any
asymptotic result for $\widehat{C}_{l}$ under non-Gaussianity assumptions
for $T$, is by no means trivial and still almost completely open for
research.

\subsection{High-frequency Gaussianity}

A different form of asymptotic theory has been addressed in an apparently
unrelated stream of research, for instance in \cite{MPBER}.

\begin{definition}[HFG]
\label{D : HFG}Let $T(x)$ be an isotropic, finite variance spherical random
field, and recall the notation (\ref{basfor}) and (\ref{frequency compnts}).
We say that $T(x)$ is \textbf{high-frequency Gaussian (HFG)} whenever%
\begin{equation}
\frac{T_{l}(x)}{\sqrt{Var\left\{ T_{l}(x)\right\} }}\overset{law}{%
\rightarrow }N(0,1)\text{ , as }l\rightarrow \infty \text{, }  \label{aclt}
\end{equation}%
for every fixed $x\in S^{2}$.
\end{definition}

\bigskip

\noindent \textbf{Remark. }It is more delicate to define HFG involving
convergence in the sense of finite dimensional distributions. Indeed, in %
\cite{MPBER} it is shown that, even if relation (\ref{aclt}) holds, the
finite-dimensional distributions of order $\geq 2$ of the field $x\mapsto
T_{l}(x)/\sqrt{Var\left\{ T_{l}(x)\right\} }$ may not converge to any limit.

\bigskip

It is clear that a Gaussian field is asymptotically Gaussian: however, as
shown in \cite{MPBER}, characterizing non-Gaussian fields that are HFG can
be a difficult task, even if the underlying field $T$ is a simple
transformation (for instance, the square) of some Gaussian random function.
Conditions for the HFG property to hold in some non-Gaussian circumstances
are given in \cite{MPBER}, by using group representations -- yielding some
interesting connection with random walks on hypergroups associated with the
power spectrum of $T$. We stress that the possible existence of HFG\textit{\
}behaviour entails deep consequences on CMB data analysis. On one hand, in
fact, parameter estimation on CMB data is largely dominated by likelihood
approaches, whence an asymptotically Gaussian behaviour would great simplify
the implementation of optimal procedures. On the other hand, testing for
non-Gaussianity is a key ingredient in the validation of the so-called
\textsl{inflationary scenarios}, and the possible existence of high
frequency Gaussian components for non-Gaussian models might set a
theoretical limit to the investigation in this area.

\subsection{Purpose and plan}

Our purpose in this paper is to investigate the relationships between the
HFG and HFE properties under an assumption of Gaussian subordination, that
is, by considering fields $T$ that can be written as a deterministic
function of some isotropic, real-valued Gaussian field. We will mainly focus
on the case of polynomial subordinations, where the polynomials are of the
Hermite type. Note also that Gaussian subordination is the favoured
framework for CMB modeling in a non-Gaussian setting (see e.g. \cite{Bart},%
\cite{Hu}, \cite{yadav2}).

Our main finding is that, despite their apparent independence, the HFG and
HFE properties will turn out to be very close in a broad class of
circumstances, suggesting that ergodicity (and hence the possibility to draw
asymptotically justifiable statistical inferences) and asymptotic
Gaussianity are very tightly related in a high-resolution setting. This may
lead, we believe, to important characterizations of Gaussian random fields,
and to a better understanding of the conditions for the validity of
statistical inference procedures based on observations drawn from a unique
realizations of a compactly supported random field, as in the spherical case.

\bigskip

The plan of this paper is as follows: in Section 2 we state and prove our
main result, establishing necessary and sufficient conditions for ergodicity
and Gaussianity and exploring the link between them. Indeed, these
conditions turn out to be extremely close, so that in Section 3 we can
indeed discuss more thoroughly a special case of practical relevance, namely
the quadratic case. Section 4 is devoted to further discussion and
directions for further research.

\section{A general statement about Gaussian subordinated fields}

The two notations (\ref{basfor}) and (\ref{frequency compnts}) are adopted
throughout the sequel. Let us first recall a few basic facts and definitions.

\bigskip

\noindent (\textbf{I}) The first point concerns a characterization of
isotropy in terms of angular power spectra. Indeed, as discussed in \cite%
{Hu,MPJMVA}, if a random field is isotropic with finite fourth-order moment,
then there exists necessarily an array $\left\{ \mathcal{T}%
_{l_{1}l_{2}}^{l_{3}l_{4}}(L)\right\} $ such that%
\begin{eqnarray}
&&cum\left\{
a_{l_{1}m_{1}},a_{l_{2}m_{2}},a_{l_{3}m_{3}},a_{l_{4}m_{4}}\right\}
\label{TRISpctr} \\
&=&\sum_{LM}(-1)^{M}\left(
\begin{array}{ccc}
l_{1} & l_{2} & L \\
m_{1} & m_{2} & M%
\end{array}%
\right) \left(
\begin{array}{ccc}
l_{3} & l_{4} & L \\
m_{3} & m_{4} & -M%
\end{array}%
\right) (2L+1)\mathcal{T}_{l_{1}l_{2}}^{l_{3}l_{4}}(L)\text{ .}  \notag
\end{eqnarray}%
In general, the symbol $cum\left\{ X_{1},...,X_{m}\right\} $ denotes the
joint cumulant of the random variables $X_{1},...,X_{m}.$ Also, we label as
usual $\left\{ \mathcal{T}_{l_{1}l_{2}}^{l_{3}l_{4}}(L)\right\} $ the
cumulant trispectrum of the random field (see for instance \cite{Hu}, \cite%
{MPJMVA}); as made clear by our notation, the quantity $\mathcal{T}%
_{l_{1}l_{2}}^{l_{3}l_{4}}(L)$ does not depend on $m_{1},$ $m_{2},$ $m_{3},$
$m_{4}$ (this phenomenon is analogous to the fact that the power spectrum
only depends on the frequency $l$ -- see \cite{MPJMVA} for a discussion of
this point). On the right-hand side of (\ref{TRISpctr}), we have also
introduced the well-known Wigner's coefficients, which arise in the group
representation theory of $SO(3)$ and are discussed at length in many
excellent monographs on the quantum theory of angular momentum -- see e.g. %
\cite{LIB,VMK,VIK}. For future reference, we also recall that the Wigner's
3j coefficients are equivalent, up to normalization and phase factor, to the
Clebsch-Gordan coefficients given by (see e.g. \cite{VMK})%
\begin{equation}
C_{l_{1}m_{1}l_{2}m_{2}}^{LM}=(-1)^{l_{1}-l_{2}+m_{3}}\sqrt{2L+1}\left(
\begin{array}{ccc}
l_{1} & l_{2} & L \\
m_{1} & m_{2} & M%
\end{array}%
\right) \text{ .}  \label{noemi}
\end{equation}%
As noted by \cite{Hu}, geometrically the multipoles $%
(l_{1},l_{2},l_{3},l_{4})$ can be viewed as the sides of a quadrilateral,
and $L$ as one of its main diagonals; $L$ is also the shared size of the two
triangle formed by the corresponding pairs of side. Clebsch-Gordan
coefficients ensure that the triangle conditions are satisfied, indeed they
are different from zero only if. $l_{1}\leq l_{2}+L,$ $l_{2}\leq l_{1}+L,$
and $L\leq l_{1}+l_{2}.$

\bigskip

\noindent (\textbf{II) }We shall sometimes label a point $x$ of the sphere $%
S^{2}$ in terms of its spherical coordinates, that is, $x=\left( \vartheta
,\varphi \right) $, where $0\leq \vartheta \leq \pi $ and $0\leq \varphi
<2\pi .$

\bigskip

\noindent (\textbf{III}) Easy considerations yield the important fact that,
for any isotropic random field $T$,%
\begin{equation*}
T_{l}(\vartheta ,\varphi )\overset{law}{=}T_{l}(\overline{N}%
)=\sum_{lm}a_{lm}Y_{lm}(\overline{N})\overset{law}{=}a_{l0}\sqrt{\frac{2l+1}{%
4\pi }}\text{ ,}
\end{equation*}%
where we denote by $\overline{N}:=(0,0)$ the North Pole of the sphere and by
\textquotedblleft\ $\overset{law}{=}$ \textquotedblright\ the equality in
law between two random elements.

\bigskip

\noindent (\textbf{IV}) It is immediate that, if $T$ is isotropic, then for
every deterministic function $F$ the \textsl{subordinated} random
application $x\mapsto F\left( T\left( x\right) \right) $ is also isotropic.
Moreover, if $F(T\left( x\right) )$ is square integrable, then $F(T(\cdot ))$
also admits a harmonic expansion analogous to (\ref{basfor}). One specific
instance of this situation is obtained by choosing $T$ to be Gaussian and
isotropic, and $F$ to be any of the Hermite polynomials $\left\{ H_{q}:q\geq
0\right\} $ (in this case, one talks about a Gaussian subordination\textsl{\
}of the Hermite type). We recall that the polynomials $H_{q}$ are such that $%
H_{q}=\delta ^{q}\mathbf{1}$, where $\mathbf{1}$ stands for the function
which is constantly equal to one, $\delta ^{0}$ is the identity, and $\delta
^{q}$ ($q\geq 1$) represents the $q$th iteration of the divergence operator $%
\delta $, acting on smooth functions as $\delta f\left( x\right) =xf\left(
x\right) -f^{\prime }\left( x\right) .$ For instance $H_{0}=\mathbf{1}$, $%
H_{1}\left( x\right) =x$, $H_{2}\left( x\right) =x^{2}-1$, and so on. When $%
T $ is Gaussian, we adopt the notation%
\begin{equation}
H_{q}(T(x)):=T_{q}(x)=\sum_{l=0}^{\infty }T_{l;q}(x)\text{, \ \ }x\in S^{2}%
\text{, }q\geq 2\text{ ,}  \label{TqHAR}
\end{equation}%
where%
\begin{equation}
T_{l;q}(x)=\sum_{m=-l}^{l}a_{lm;q}Y_{lm}\left( x\right)  \label{TqFREQ}
\end{equation}%
is the $l$th frequency component of $T_{q}$, with $a_{lm;q}$ the associated
harmonic coefficients. We shall also write $\left\{ C_{l;q}:l\geq 0\right\} $
and $\left\{ \mathcal{T}_{ll}^{ll}(L;q)\right\} $, respectively, for the
power spectrum and for the cumulant trispectrum of $T_{q}$, as introduced at
Point (\textbf{I})$.$ According to \cite[Theorem 3]{MPBER}, one has that $%
C_{l;q}$ admits the following expansion in terms of the power spectrum $%
\left\{ C_{l}\right\} $ of $T$:%
\begin{equation}
C_{l;q}=q!\!\sum_{l_{1},...,l_{q}=0}^{\infty }C_{l_{1}}\!\cdot \!\cdot
\!\cdot \!C_{l_{q}}\frac{4\pi }{2l+1}\left\{ \prod_{i=1}^{q}\frac{2l_{i}+1}{%
4\pi }\right\} \sum_{L_{1}...L_{q-2}}\left\{
C_{l_{1},0;...;l_{q}0}^{L_{1},L_{2},...,L_{q-2},l;0}\right\} ^{2}\text{,}
\label{convCCC}
\end{equation}%
where $C_{l_{1},0;...;l_{q}0}^{L_{1},L_{2},...,L_{q-2},l;0}$ indicates a
convolution\textsl{\ }of Clebsch-Gordan coefficients, that is%
\begin{equation}
C_{l_{1},m_{1};...;l_{p}m_{p}}^{\lambda _{1},\lambda _{2},...,\lambda
_{p-1};\mu }\!:=\!\sum_{\mu _{1}=-\lambda _{1}}^{\lambda
_{1}}\!...\!\sum_{\mu _{p-2}=-\lambda _{p-2}}^{\lambda
_{p-2}}C_{l_{1},m_{1},l_{2},m_{2}}^{\lambda _{1},\mu _{1}}C_{\lambda
_{1},\mu _{1};l_{3},m_{3}}^{\lambda _{2},\mu _{2}}\cdot \cdot \cdot
C_{\lambda _{p-2},\mu _{p-2};l_{p},m_{p}}^{\lambda _{p-1},\mu }
\label{convo}
\end{equation}%
(see \cite{MPBER} and \cite{MPJMVA} for more details on these convolutions,
which can also be viewed as probability amplitudes in alternative coupling
schemes for quantum angular momenta, compare \cite{BIL}).

\bigskip

\noindent (\textbf{V}) An easy but important remark is the following. Since
the expansion (\ref{basfor}) is in order, the law of a centered isotropic
Gaussian field $T$ is completely encoded by the power spectrum $\left\{
C_{l}:l\geq 0\right\} $. This is a consequence of the fact that, in this
case, the array $\left\{ a_{lm}:l\geq 0,\text{ \ }m=0,...,l\right\} $ is
composed of independent Gaussian random variables such that: (i) $a_{l0}$ is
real-valued, and (ii) for every $m\geq 1$, the coefficient $a_{lm}$ has
independent and equidistributed real and imaginary parts.

\bigskip

As anticipated, we shall now prove some new connections between HFE and HFG
spherical fields (see Definitions \ref{D : HFE} and \ref{D : HFG}), in the
special case of fields of the type $T_{q}$, as defined in (\ref{TqHAR}). In
particular, our main finding (as stated in Theorem \ref{T : MAIN}) Note that
the conditions appearing in the following statement involve the coefficients
$C_{l;q}$ given in (\ref{convCCC}), and that these coefficients are
completely determined by the power spectrum of the underlying Gaussian field
$T$.

\begin{theorem}
\label{T : MAIN}Let $q\geq 2$, and define $T_{q}$ according to (\ref{TqHAR}%
), where $T$ is Gaussian and isotropic. Let $\mathcal{T}%
_{l_{1}l_{2}}^{l_{3}l_{4}}(L;q)$ be the reduced trispectrum of $T$.
Introduce the notation%
\begin{equation*}
w_{1l}(L):=\left( C_{l0l0}^{L0}\right) ^{2}\text{ \ \ and \ \ }w_{2l}(L)=%
\frac{(2L+1)}{(2l+1)^{2}}\text{,}
\end{equation*}%
in such a way that%
\begin{equation*}
\sum_{L=0}^{2l}w_{1l}(L)=\sum_{L=0}^{2l}w_{2l}(L)=1\text{ .}
\end{equation*}%
Then, the following holds.

\begin{enumerate}
\item the random field $T_{q}$ is high-frequency Gaussian if and only if
\begin{equation}
\lim_{l\rightarrow \infty }\sum_{L=0}^{2l}w_{1l}(L)\frac{\mathcal{T}%
_{ll}^{ll}(L;q)}{C_{l;q}^{2}}=0.  \label{Eq SUM1}
\end{equation}

\item On the other hand, $T_{q}$ is high-frequency ergodic if and only if%
\begin{equation}
\lim_{l\rightarrow \infty }\sum_{L=0}^{2l}w_{2l}(L)\frac{\mathcal{T}%
_{ll}^{ll}(L;q)}{C_{l;q}^{2}}=0.  \label{EqSUM2}
\end{equation}
\end{enumerate}
\end{theorem}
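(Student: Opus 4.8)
The plan is to reduce both equivalences to a single computation: expressing a fourth-order functional of the coefficients $a_{lm;q}$ in terms of the reduced trispectrum $\mathcal{T}_{ll}^{ll}(L;q)$ by means of the Wigner/Clebsch--Gordan expansion (\ref{TRISpctr}), and then recognising that HFG and HFE correspond to two different ways of weighting $\mathcal{T}_{ll}^{ll}(L;q)/C_{l;q}^2$ over the diagonal index $L$. I would first record the normalisations $\sum_L w_{1l}(L)=\sum_L w_{2l}(L)=1$ (the first from the completeness of Clebsch--Gordan coefficients evaluated at $m_1=m_2=0$, the second from $\sum_{L=0}^{2l}(2L+1)=(2l+1)^2$); these exhibit both quantities as genuine averages of $\mathcal{T}_{ll}^{ll}(L;q)/C_{l;q}^2$ against nonnegative weights summing to one, so that the whole content of the theorem is the identification of the two weight families.

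For Part 1 (HFG) I would first use point (III) to replace the field value by a single coefficient: since $T_{l;q}(x)\overset{law}{=}a_{l0;q}\sqrt{(2l+1)/4\pi}$ and $\mathrm{Var}\{T_{l;q}(x)\}=\tfrac{2l+1}{4\pi}C_{l;q}$, the property (\ref{aclt}) for $T_q$ is equivalent to $a_{l0;q}/\sqrt{C_{l;q}}\overset{law}{\rightarrow}N(0,1)$. Because $a_{l0;q}$ lies in the $q$th Wiener chaos of $T$ and the normalised variable has unit variance, the fourth-moment theorem of \cite{Nourpec,NourPecRein} applies, so the convergence holds if and only if $\mathrm{cum}_4(a_{l0;q})/C_{l;q}^2\to 0$. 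It then remains to evaluate $\mathrm{cum}_4(a_{l0;q})$ through (\ref{TRISpctr}): with all azimuthal indices equal to zero the Wigner symbols force $M=0$ and collapse to $\left(\begin{smallmatrix} l & l & L \\ 0 & 0 & 0\end{smallmatrix}\right)^2$, which by (\ref{noemi}) equals $w_{1l}(L)/(2L+1)$. This gives $\mathrm{cum}_4(a_{l0;q})=\sum_L w_{1l}(L)\,\mathcal{T}_{ll}^{ll}(L;q)$, and dividing by $C_{l;q}^2$ yields exactly (\ref{Eq SUM1}).

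For Part 2 (HFE) I would start from $E\{\widetilde{C}_{l;q}-1\}^2=\mathrm{Var}(\widetilde{C}_{l;q})=\mathrm{Var}(\widehat{C}_{l;q})/C_{l;q}^2$ and expand $\mathrm{Var}(\widehat{C}_{l;q})$, with $\widehat{C}_{l;q}=\tfrac{1}{2l+1}\sum_m|a_{lm;q}|^2$, by the moment--cumulant (diagram) formula. The pair-contraction (disconnected) terms depend only on the covariances $E[a_{lm;q}\overline{a_{lm';q}}]=C_{l;q}\delta_m^{m'}$ and hence reproduce the chi-square value $2C_{l;q}^2/(2l+1)$ of Section 1.3, contributing $2/(2l+1)\to 0$ after normalisation; here I would use the identity $\sum_m|a_{lm;q}|^2=\int_{S^2}T_{l;q}^2\,d\lambda$ together with the addition theorem $\sum_m Y_{lm}(x)\overline{Y_{lm}(y)}=\tfrac{2l+1}{4\pi}P_l(\langle x,y\rangle)$. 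For the connected part I would convert the conjugates via the reality relation $\overline{a_{lm;q}}=(-1)^m a_{l,-m;q}$, insert (\ref{TRISpctr}), and perform the $m$-sums using the orthogonality relation $\sum_{m_1 m_2}\left(\begin{smallmatrix} l & l & L \\ m_1 & m_2 & M\end{smallmatrix}\right)^2=\tfrac{1}{2L+1}$; the reality phases $(-1)^m$ cancel against the $(-1)^M$ factor of (\ref{TRISpctr}), and the connected part of $\mathrm{Var}(\widehat{C}_{l;q})$ collapses to $\tfrac{1}{(2l+1)^2}\sum_L(2L+1)\,\mathcal{T}_{ll}^{ll}(L;q)$. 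Dividing by $C_{l;q}^2$ turns this into $\sum_L w_{2l}(L)\,\mathcal{T}_{ll}^{ll}(L;q)/C_{l;q}^2$, and since the disconnected part vanishes in the limit, $E\{\widetilde{C}_{l;q}-1\}^2\to 0$ if and only if (\ref{EqSUM2}) holds.

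The main obstacle is the connected-part bookkeeping in Part 2: one must assign the four coefficients to the coupling scheme of (\ref{TRISpctr}) in the order matching the definition of $\mathcal{T}_{ll}^{ll}(L;q)$, track the reality phases correctly, and apply the Wigner identities so that the $m$-sums produce the uniform weight $w_{2l}(L)=(2L+1)/(2l+1)^2$ rather than some other combination. A useful simplification is that the reduced trispectrum of a Hermite (Gaussian-subordinated) field is supported on even $L$, a consequence of the parity of the $\left(\begin{smallmatrix} l & l & L \\ 0 & 0 & 0\end{smallmatrix}\right)$ Gaunt factors underlying its convolution structure (compare (\ref{convCCC})); this removes the sign factors $(-1)^L$ that would otherwise arise when reversing the signs of the azimuthal indices. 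Part 1 is comparatively routine once the fourth-moment theorem is invoked; the only care needed there is to confirm that $a_{l0;q}$ genuinely lies in a fixed Wiener chaos with the unit-variance normalisation demanded by that theorem.
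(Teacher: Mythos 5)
Your proposal reproduces the paper's proof essentially step for step: Part~1 via the isotropy reduction $T_{l;q}(x)\overset{law}{=}a_{l0;q}\sqrt{(2l+1)/4\pi}$, the Nualart--Peccati/Nourdin--Peccati fourth-moment criterion inside the fixed $q$th Wiener chaos, and the collapse of (\ref{TRISpctr}) at zero azimuthal indices to the weights $w_{1l}(L)$; and Part~2 via the variance decomposition of $\widehat{C}_{l;q}/C_{l;q}$ into the disconnected term $2/(2l+1)$ plus the connected cumulant term, which the reality relation $\overline{a}_{lm;q}=(-1)^{m}a_{l,-m;q}$ and the Clebsch--Gordan orthogonality (\ref{unicg}) reduce to the uniform weights $w_{2l}(L)$, exactly as in the paper's chain (\ref{sum0})--(\ref{sum2}). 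The only cosmetic deviations are an immaterial multiplicative constant in the connected part and your parity argument for the even-$L$ support of the trispectrum, where the paper instead invokes the symmetry $\mathcal{T}_{l_{1}l_{2}}^{l_{3}l_{4}}(L)=(-1)^{l_{1}+l_{2}+L}\mathcal{T}_{l_{1}l_{2}}^{l_{3}l_{4}}(L)$ of Lemma \ref{L : ODD}.
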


\bigskip

Before proving Theorem \ref{T : MAIN}, we shall note that $\left\{
C_{l0l0}^{L0}\right\} ^{2}$ is different from zero only for $L$ even, and $%
\mathcal{T}_{ll}^{ll}(L)$ is not in general positive-valued. Moreover, in
view of the forthcoming Lemma \ref{L : ODD}, also in (\ref{EqSUM2}) the sum
runs only over even values of $L.$

\begin{lemma}
\label{L : ODD}$\mathcal{T}_{ll}^{ll}(L)$ is zero when $L$ is odd
\end{lemma}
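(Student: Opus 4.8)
The plan is to extract the vanishing from two symmetries: the invariance of joint cumulants under permutation of their arguments, and the phase acquired by a Wigner 3j symbol under transposition of two columns. Since $\mathcal{T}_{ll}^{ll}(L)$ is defined only implicitly through the representation (\ref{TRISpctr}), the strategy is to impose a symmetry on the left-hand side of that identity and then read off the resulting constraint on the coefficients appearing on the right-hand side.

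First I would use that $cum\{a_{lm_1},a_{lm_2},a_{lm_3},a_{lm_4}\}$ is a symmetric function of its arguments; in particular it is unchanged under the swap $m_1\leftrightarrow m_2$. On the right-hand side of (\ref{TRISpctr}), this swap acts only on the first 3j symbol. Recalling the standard rule that transposing the first two columns of $\begin{pmatrix} l_1 & l_2 & L \\ m_1 & m_2 & M\end{pmatrix}$ introduces the phase $(-1)^{l_1+l_2+L}$, which for $l_1=l_2=l$ collapses to $(-1)^{2l+L}=(-1)^{L}$, I find that every summand over $L$ is multiplied by $(-1)^{L}$. Equating the two expressions for the same cumulant then yields, for all $m_1,m_2,m_3,m_4$,
\begin{equation*}
\sum_{LM}(-1)^{M}\left[1-(-1)^{L}\right]\begin{pmatrix} l & l & L \\ m_1 & m_2 & M\end{pmatrix}\begin{pmatrix} l & l & L \\ m_3 & m_4 & -M\end{pmatrix}(2L+1)\,\mathcal{T}_{ll}^{ll}(L)=0,
\end{equation*}
so that only the terms with $L$ odd can survive, the bracket being equal to $2$ precisely in that case.

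To conclude that each odd-$L$ coefficient vanishes separately, I would invert this identity: multiply by $\begin{pmatrix} l & l & L_0 \\ m_1 & m_2 & M_0\end{pmatrix}$ for a fixed odd $L_0\le 2l$ and sum over $m_1,m_2$. The orthogonality relation for 3j symbols collapses the double sum over $L,M$ to the single term $L=L_0$, $M=M_0$, leaving (up to a nonzero normalizing constant) the equation $(-1)^{M_0}\begin{pmatrix} l & l & L_0 \\ m_3 & m_4 & -M_0\end{pmatrix}\mathcal{T}_{ll}^{ll}(L_0)=0$, valid for all $m_3,m_4,M_0$. Since the surviving 3j symbol is not identically zero over the admissible range of its lower indices whenever $L_0\le 2l$, this forces $\mathcal{T}_{ll}^{ll}(L_0)=0$.

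The computation itself is short, and I do not expect a genuine obstacle; the points to watch are purely bookkeeping ones. Specifically, I must track the phase factors carefully through the column transposition and keep the normalization in the orthogonality relation correct, and I must record the elementary observation that the 3j symbol used in the inversion is nonzero for at least one admissible choice of $m_3,m_4,M_0$. Once these routine items are settled, the conclusion is immediate.
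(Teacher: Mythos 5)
Your proof is correct, and the mechanism is the same parity symmetry the paper uses, but you derive it where the paper merely cites it. The paper's proof is two lines: it quotes the identity $\mathcal{T}_{l_{1}l_{2}}^{l_{3}l_{4}}(L)=(-1)^{l_{1}+l_{2}+L}\,\mathcal{T}_{l_{1}l_{2}}^{l_{3}l_{4}}(L)$ from Eq.\ (17) of \cite{Hu} and specializes to $l_{1}=l_{2}=l_{3}=l_{4}=l$, so the phase becomes $(-1)^{L}$ and odd $L$ forces the coefficient to vanish. Your argument reconstructs exactly the origin of that cited identity: permutation invariance of the cumulant in its first two arguments, combined with the column-transposition phase $(-1)^{l_{1}+l_{2}+L}=(-1)^{2l+L}=(-1)^{L}$ of the Wigner 3j symbol (valid since $l$ is an integer). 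Moreover, your inversion step --- pairing against $\begin{pmatrix} l & l & L_{0} \\ m_{1} & m_{2} & M_{0} \end{pmatrix}$ and summing over $m_{1},m_{2}$ via the orthogonality relation, which is precisely the paper's (\ref{unicg}) --- fills a point the citation-based proof leaves implicit, namely that the expansion (\ref{TRISpctr}) determines $\mathcal{T}_{ll}^{ll}(L)$ uniquely (the kernels indexed by distinct $(L,M)$ are orthogonal), so the constraint can legitimately be read off coefficientwise. Your remaining bookkeeping item, that $\begin{pmatrix} l & l & L_{0} \\ m_{3} & m_{4} & -M_{0} \end{pmatrix}$ is not identically zero for $L_{0}\leq 2l$, is settled by the same orthogonality relation, since
\begin{equation*}
\sum_{m_{3}m_{4}}\begin{pmatrix} l & l & L_{0} \\ m_{3} & m_{4} & -M_{0} \end{pmatrix}^{2}=\frac{1}{2L_{0}+1}>0
\end{equation*}
whenever the triangle condition holds. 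In short, the paper buys brevity by outsourcing the symmetry to \cite{Hu}; your version buys self-containedness at the cost of a routine but genuinely necessary inversion step, and both are sound.
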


\begin{proof}
From \cite[Eq. (17)]{Hu}, we infer that, in general,%
\begin{equation*}
\mathcal{T}_{l_{1}l_{2}}^{l_{3}l_{4}}(L)=(-1)^{l_{1}+l_{2}+L}\mathcal{T}%
_{l_{1}l_{2}}^{l_{3}l_{4}}(L)\text{ .}
\end{equation*}%
Considering the case $l_{1}=l_{2}=l_{3}=l_{4}=l,$ we obtain the desired
result.
\end{proof}

\bigskip

\begin{proof}[Proof of Theorem \ref{T : MAIN}]
(\textbf{Proof of 1.}) Consider the random spherical field%
\begin{equation*}
(\vartheta ,\varphi )\mapsto \hat{T}_{l;q}(\vartheta ,\varphi ):=\frac{%
T_{l;q}(\vartheta ,\varphi )}{\sqrt{Var\left\{ T_{l;q}(\overline{N})\right\}
}}\text{, \ \ }(\vartheta ,\varphi )\in \lbrack 0,\pi ]\times \lbrack 0,2\pi
)\text{,}
\end{equation*}%
where $\overline{N}$ is the North Pole, and observe that, by isotropy and
for every $(\vartheta ,\varphi )$,
\begin{equation*}
\hat{T}_{l;q}(\vartheta ,\varphi )\overset{law}{=}\frac{a_{l0}}{\sqrt{4\pi
C_{l;q}}}\text{ }.
\end{equation*}%
The field $\hat{T}_{l;q}$ is mean-zero and has unit variance: since it also
belong to the $q$th Wiener chaos associated with $T$, we can deduce from the
results in \cite{NuPe} it is asymptotically Gaussian if and only if
\begin{equation*}
\lim_{l\rightarrow \infty }\frac{1}{C_{l;q}^{2}}cum_{4}\left\{
a_{l0;q}\right\} =0\text{ .}
\end{equation*}%
As discussed e.g. in \cite{Hu} and \cite{MPJMVA}, isotropy entails that we
can write the fourth-order cumulant as
\begin{eqnarray*}
cum_{4}\left\{ a_{l0;q}\right\} &=&\sum_{LM}(-1)^{M}\left(
\begin{array}{ccc}
l & l & L \\
0 & 0 & M%
\end{array}%
\right) \left(
\begin{array}{ccc}
l & l & L \\
0 & 0 & -M%
\end{array}%
\right) (2L+1)\mathcal{T}_{ll}^{ll}(L) \\
&=&\sum_{L}\left(
\begin{array}{ccc}
l & l & L \\
0 & 0 & 0%
\end{array}%
\right) ^{2}(2L+1)\mathcal{T}_{ll}^{ll}(L)\text{ ,}
\end{eqnarray*}%
so that the field is asymptotically Gaussian if and only if
\begin{equation}
\lim_{l\rightarrow \infty }\frac{1}{C_{l;q}^{2}}\sum_{L}\left(
\begin{array}{ccc}
l & l & L \\
0 & 0 & 0%
\end{array}%
\right) ^{2}(2L+1)\mathcal{T}_{ll}^{ll}(L)=0\text{ .}  \label{sum1}
\end{equation}%
Since relation (\ref{noemi}) is in order, we write%
\begin{equation*}
\left(
\begin{array}{ccc}
l & l & L \\
0 & 0 & 0%
\end{array}%
\right) ^{2}(2L+1)=\left\{ C_{l0l0}^{L0}\right\} ^{2}\text{,}
\end{equation*}%
entailing in turn that%
\begin{equation*}
\sum_{L}\left(
\begin{array}{ccc}
l & l & L \\
0 & 0 & 0%
\end{array}%
\right) ^{2}(2L+1)=\sum_{L=0}^{2l}\left\{ C_{l0l0}^{L0}\right\}
^{2}=\sum_{L=0}^{2l}\sum_{M=-L}^{L}\left\{ C_{l0l0}^{LM}\right\} ^{2}\equiv 1%
\text{ ,}
\end{equation*}%
where the second equality follows from the fact that Clebsch-Gordan
coefficients $C_{l_{1}m_{1}l_{2}m_{2}}^{l_{3}m_{3}}$ are different from zero
only for $m_{3}=m_{1}+m_{2}$, and the third equality is a consequence from
the orthonormality properties of the coefficients (which are the elements of
unitary matrices whose rows are indexed by $m_{1},m_{2}$ and whose columns
are indexed by $l_{3},m_{3}$)$.$ We therefore have%
\begin{equation*}
\frac{1}{C_{l;q}^{2}}cum_{4}\left\{ a_{l0;q}\right\} =\frac{1}{C_{l;q}^{2}}%
\sum_{L}\left\{ C_{l0l0}^{L0}\right\} ^{2}\mathcal{T}_{ll}^{ll}(L)\text{ ,}
\end{equation*}%
yielding the desired conclusion.

(\textbf{Proof of 2.}) On the other hand, we obtain also%
\begin{equation*}
E\left\{ \frac{\widehat{C}_{l;q}}{C_{l;q}}-1\right\} ^{2}=Var\left\{ \frac{%
\widehat{C}_{l;q}}{C_{l;q}}-1\right\}
\end{equation*}%
\begin{equation}
=\frac{1}{(2l+1)^{2}}\frac{1}{C_{l;q}^{2}}\sum_{m_{1}m_{2}}cum\left\{
a_{lm_{1};q},\overline{a}_{lm_{1};q,}a_{lm_{2};q},\overline{a}%
_{lm_{2};q}\right\} +\frac{2}{(2l+1)^{2}}\frac{1}{C_{l;q}^{2}}%
\sum_{m}\left\{ E\left\vert a_{lm;q}\right\vert ^{2}\right\} ^{2}
\label{sum0}
\end{equation}%
\begin{equation}
=\frac{1}{(2l+1)^{2}}\frac{1}{C_{l;q}^{2}}%
\sum_{m_{1}m_{2}}(-1)^{m_{1}+m_{2}}cum\left\{
a_{lm_{1};q},a_{l,-m_{1};q,}a_{lm_{2};q},a_{l,-m_{2};q}\right\} +\frac{2}{%
(2l+1)}  \label{sum00}
\end{equation}%
\begin{eqnarray}
&=&\frac{2}{(2l+1)^{2}}\frac{1}{C_{l;q}^{2}}\sum_{m_{1}m_{2}}%
\sum_{LM}(-1)^{M+m_{1}+m_{2}}\left(
\begin{array}{ccc}
l & l & L \\
m_{1} & m_{2} & M%
\end{array}%
\right) \times \notag \\
&&\times \left(
\begin{array}{ccc}
l & l & L \\
-m_{1} & -m_{2} & -M%
\end{array}%
\right) (2L+1)\mathcal{T}_{ll}^{ll}(L) +\frac{2}{(2l+1)}  \label{sum1b}
\end{eqnarray}%
\begin{equation}
=\frac{2}{(2l+1)^{2}}\frac{1}{C_{l;q}^{2}}\sum_{\substack{ L=0  \\ L\text{
even}}}^{2l}(2L+1)\mathcal{T}_{ll}^{ll}(L)+\frac{2}{(2l+1)}\text{.}
\label{sum2}
\end{equation}%
It is simple to notice that%
\begin{equation*}
\sum_{L=0}^{2l}(2L+1)=2\frac{2l(2l+1)}{2}+2l+1=(2l+1)^{2},
\end{equation*}%
so that we have%
\begin{equation*}
E\left\{ \frac{\widehat{C}_{l;q}}{C_{l;q}}-1\right\} ^{2}=2\sum_{\substack{ %
L=0  \\ L\text{ even}}}^{2l}w_{lL}^{ll}\mathcal{T}_{ll}^{ll}(L)+\frac{2}{%
(2l+1)}\text{ , where }w_{lL}\geq 0\text{ and }\sum_{\substack{ L=0  \\ L%
\text{ even}}}^{2l}w_{lL}=1\text{.}
\end{equation*}%
The result now follows immediately.
\end{proof}

\bigskip

\noindent \textbf{Remark. }Note that%
\begin{equation*}
\left\{ C_{l0l0}^{L0}\right\} ^{2}=\frac{(2L+1)(\frac{2l+L}{2}!)^{2}}{(\frac{%
L}{2}!)^{2}}\frac{(L!)^{2}(2l-L)!}{(2l+L+1)!}\leq \frac{1}{(2L+1)}
\end{equation*}%
\begin{equation*}
w_{2l}(L)=\frac{(2L+1)}{(2l+1)^{2}}\leq \frac{1}{2l+1}
\end{equation*}%
Note also that in the Gaussian case (e.g., $q=1$) we have $\mathcal{T}%
_{ll}^{ll}(L)\equiv 0,$ whence
\begin{equation*}
E\left\{ \frac{\widehat{C}_{l;q}}{C_{l;q}}-1\right\} ^{2}=\frac{2}{(2l+1)}%
\rightarrow 0\text{ ,}
\end{equation*}%
as expected.

\bigskip

The previous result strongly suggests that the conditions for asymptotic
Gaussianity (HFG) and for ergodicity (HFE) should be tightly related. Indeed
we conjecture that HFE and HFG are equivalent in the case of Hermite type
Gaussian subordinations (and most probably even in more general
circumstances). However, proving this claim seems analytically too demanding
at this stage, so that for the rest of the paper we content ourselves with a
detailed analysis of \textsl{quadratic Gaussian subordinations. }In
particular, we believe that the content of the forthcoming Section \ref%
{noemi} (which is already quite technical) may provide the seed for a
complete understanding of the HFG-HFE connection.

\bigskip

\begin{remark}
It should be noted that the reduced trispectrum satisfies (see \cite[Eq.
(16)]{Hu})
\begin{equation*}
\mathcal{T}_{ll}^{ll}(L^{\prime })=\sum_{L}(2L+1)\left\{
\begin{tabular}{ccc}
$l$ & $l$ & $L$ \\
$l$ & $l$ & $L^{\prime }$%
\end{tabular}%
\right\} \mathcal{T}_{ll}^{ll}(L)\text{ .}
\end{equation*}
\end{remark}

In the previous remark, we introduced the well-known Wigner's 6j
coefficients, which intertwine alternative coupling schemes of three quantum
angular momenta (see \cite{BIL}, \cite{VMK} for futher properties and much
more discussion). Their relationship with Wigner's 3j coefficients is
provided by the identity
\begin{equation}
\left\{
\begin{array}{ccc}
a & b & e \\
c & d & f%
\end{array}%
\right\} :=\sum_{\substack{ \alpha ,\beta ,\gamma  \\ \varepsilon ,\delta
,\phi }}(-1)^{e+f+\varepsilon +\phi }\left(
\begin{array}{ccc}
a & b & e \\
\alpha & \beta & \varepsilon%
\end{array}%
\right) \left(
\begin{array}{ccc}
c & d & e \\
\gamma & \delta & -\varepsilon%
\end{array}%
\right) \left(
\begin{array}{ccc}
a & d & f \\
\alpha & \delta & -\phi%
\end{array}%
\right) \left(
\begin{array}{ccc}
c & b & f \\
\gamma & \beta & \phi%
\end{array}%
\right)  \label{6j1}
\end{equation}%
(see \cite{VMK}, Chapter 9, for analytic expressions and a full set of
properties).

\section{The quadratic case\label{S : quadratics}}

\subsection{The class $\mathfrak{D}$ and main results}

As anticipated, the purpose of this section is to provide a more detailed
and explicit analysis of the quadratic case $q=2.$ For simplicity, in the
sequel we consider a centered Gaussian isotropic spherical field $T$ such
that $Var(T(x))=\sum_{l}(2l+1)C_{l}/4\pi =1$, where $\left\{ C_{l}\right\} $
is as before the power spectrum of $T.$ We start by recalling the notation
\begin{equation}
T_{2}(x)=H_{2}(T(x))=\sum_{l_{1},l_{2}=1}^{\infty
}\sum_{m_{1}m_{2}}a_{l_{1}m_{1}}a_{l_{2}m_{2}}Y_{l_{1}m_{1}}(x)Y_{l_{2}m_{2}}(x)-1%
\text{ ,}  \label{T-H2}
\end{equation}%
where $T$ is isotropic, centered and Gaussian. Our first result can be seen
as a consequence of formula (\ref{convCCC}) (or, more generally, of the
results of \cite{MPBER}). Here, we provide a proof for the sake of
completeness.

\begin{lemma}
The angular power spectrum of the squared random field (\ref{T-H2}) is given
by%
\begin{equation*}
C_{l;2}=E%
{\vert}%
a_{lm;2}%
{\vert}%
^{2}=2\sum_{l_{1}l_{2}}C_{l_{1}}C_{l_{2}}\left(
\begin{array}{ccc}
l_{1} & l_{2} & l \\
0 & 0 & 0%
\end{array}%
\right) ^{2}\frac{(2l+1)(2l+1)}{4\pi }\text{ .}
\end{equation*}
\end{lemma}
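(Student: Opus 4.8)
The plan is to read off the harmonic coefficients $a_{lm;2}$ of $T_{2}$ directly from the expansion (\ref{T-H2}) and then to evaluate $E|a_{lm;2}|^{2}$ using only the Gaussianity of the underlying coefficients $\{a_{l'm'}\}$. First I would linearise the products $Y_{l_{1}m_{1}}(x)Y_{l_{2}m_{2}}(x)$ appearing in (\ref{T-H2}) by means of the Gaunt integral, which writes such a product as a superposition of the $Y_{lm}$ with coefficients proportional to $\sqrt{(2l_{1}+1)(2l_{2}+1)/(4\pi (2l+1))}\,C_{l_{1}0l_{2}0}^{l0}C_{l_{1}m_{1}l_{2}m_{2}}^{lm}$. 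Projecting (\ref{T-H2}) onto $\overline{Y_{lm}}$ then gives, for every $l\geq 1$,
\[
a_{lm;2}=\sum_{l_{1}m_{1}l_{2}m_{2}}a_{l_{1}m_{1}}a_{l_{2}m_{2}}\sqrt{\frac{(2l_{1}+1)(2l_{2}+1)}{4\pi (2l+1)}}\,C_{l_{1}0l_{2}0}^{l0}\,C_{l_{1}m_{1}l_{2}m_{2}}^{lm},
\]
the constant $-1$ in (\ref{T-H2}) only affecting the monopole $l=0$, which I discard.

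Next I would form $E|a_{lm;2}|^{2}$ and expand the fourth moment of the Gaussian family $\{a_{l'm'}\}$ by Isserlis' theorem, recalling that $E[a_{l'm'}\overline{a_{l''m''}}]=C_{l'}\delta_{l'}^{l''}\delta_{m'}^{m''}$ and the reality relation $\overline{a_{l'm'}}=(-1)^{m'}a_{l',-m'}$. Three Wick pairings occur. The two \emph{connected} pairings (each conjugate coefficient pairing with an unconjugated one) produce, after using the exchange symmetry $C_{l_{2}m_{2}l_{1}m_{1}}^{lm}=(-1)^{l_{1}+l_{2}-l}C_{l_{1}m_{1}l_{2}m_{2}}^{lm}$, identical contributions — this is the source of the overall factor $2$ — and in each of them the sum over $m_{1},m_{2}$ collapses to $1$ by the orthonormality (unitarity) of the Clebsch-Gordan coefficients, leaving $\sum_{l_{1}l_{2}}(2l_{1}+1)(2l_{2}+1)\{C_{l_{1}0l_{2}0}^{l0}\}^{2}C_{l_{1}}C_{l_{2}}$. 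The remaining \emph{anomalous} pairing contracts the two unconjugated coefficients together, forcing $l_{1}=l_{2}$ and $m_{2}=-m_{1}$; using $\sum_{m}(-1)^{m}\,C_{l_{1}m\,l_{1}\,-m}^{lm}\propto \delta_{l}^{0}$ it is supported on $l=0$ only and hence vanishes for the frequencies $l\geq 1$ of interest.

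Finally I would substitute the identity (\ref{noemi}), which here reads $\{C_{l_{1}0l_{2}0}^{l0}\}^{2}=(2l+1)\left(\begin{array}{ccc} l_{1} & l_{2} & l \\ 0 & 0 & 0\end{array}\right)^{2}$, so that the factor $(2l+1)$ cancels the $1/(2l+1)$ inherited from the Gaunt normalisation, and the surviving dimensional factors $(2l_{1}+1)(2l_{2}+1)$ combine with the residual $1/4\pi$ to produce the prefactor $(2l_{1}+1)(2l_{2}+1)/4\pi$ in each term of the sum over $(l_{1},l_{2})$. Collecting the two equal connected contributions then gives the claimed closed form.

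I expect the main obstacle to be the careful bookkeeping of the phase factors $(-1)^{m}$ coming from the reality condition and from the symmetry relations of the Clebsch-Gordan coefficients, together with the precise verification that the anomalous pairing is confined to the monopole. It is exactly this last point that guarantees that the factor $2$ (rather than $3$) is correct, and that the subtraction of the constant in (\ref{T-H2}) plays no role at the frequencies $l\geq 1$.
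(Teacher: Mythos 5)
Your proposal is correct and follows essentially the same route as the paper's proof: you identify $a_{lm;2}$ via the Gaunt integral, expand $E|a_{lm;2}|^{2}$ by Wick/Isserlis so that the two connected pairings yield the factor $2$ and collapse under Clebsch--Gordan unitarity, and you confine the anomalous pairing to the monopole (which is exactly the paper's separate verification that $Ea_{lm;2}=0$), the only cosmetic difference being that you work in Clebsch--Gordan rather than Wigner $3j$ normalisation, converting at the end via (\ref{noemi}). Note that your final prefactor $(2l_{1}+1)(2l_{2}+1)/4\pi$ matches the paper's own proof, the factor $(2l+1)(2l+1)$ in the lemma's displayed statement being a typo.
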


\begin{proof}
Recall first that $Y_{00}(x)\equiv (4\pi )^{-1/2},$ see \cite{VMK}, equation
5.13.1.1. Hence, in view of (\ref{T-H2}), we have that, for $l=0$,%
\begin{eqnarray*}
a_{00;2} &=&\int_{S^{2}}\left\{
\sum_{l_{1}l_{2}}%
\sum_{m_{1}m_{2}}a_{l_{1}m_{1}}a_{l_{2}m_{2}}Y_{l_{1}m_{1}}(x)Y_{l_{2}m_{2}}(x)-1\right\}
\overline{Y}_{00}(x)dx \\
&=&\frac{1}{\sqrt{4\pi }}\sum_{l_{1}l_{2}}\sum_{m_{1}m_{2}}a_{l_{1}m_{1}}%
\overline{a}_{l_{2}m_{2}}\left\{ \int_{S^{2}}Y_{l_{1}m_{1}}(x)\overline{Y}%
_{l_{2}m_{2}}(x)dx\right\} -\sqrt{4\pi } \\
&=&\frac{1}{\sqrt{4\pi }}\sum_{l_{1}l_{2}}\sum_{m_{1}m_{2}}a_{l_{1}m_{1}}%
\overline{a}_{l_{2}m_{2}}\delta _{l_{1}}^{l_{2}}\delta _{m_{1}}^{m_{2}}-%
\sqrt{4\pi } \\
&=&\frac{1}{\sqrt{4\pi }}\sum_{lm}%
{\vert}%
a_{lm}%
{\vert}%
^{2}-\sqrt{4\pi }\text{ .}
\end{eqnarray*}%
It follows that
\begin{equation*}
Ea_{00;2}=\sum_{lm}\frac{2l+1}{\sqrt{4\pi }}C_{l}-\sqrt{4\pi }=\sqrt{4\pi }%
\left\{ \sum_{lm}\frac{2l+1}{4\pi }C_{l}-1\right\} =0\text{ ,}
\end{equation*}%
and
\begin{equation*}
EH_{2}(T(x))=E\sum_{l=0}^{\infty }a_{lm;2}Y_{lm}(x)=Ea_{00;2}Y_{00}(x)=0%
\text{ ,}
\end{equation*}%
the second step following because $Ea_{lm}=0$ for all $l>0$ under isotropy
(see \cite{BaMa}). Indeed we have (from (\ref{T-H2}), and in view of (\ref%
{TqFREQ}))%
\begin{equation*}
a_{lm;2}=\int_{S^{2}}\sum_{l_{1}l_{2}}%
\sum_{m_{1}m_{2}}a_{l_{1}m_{1}}a_{l_{2}m_{2}}Y_{lm_{1}}(x)Y_{lm_{2}}(x)%
\overline{Y}_{lm}(x)dx
\end{equation*}%
\begin{equation}
=\sum_{l_{1},l_{2}=1}^{\infty
}\sum_{m_{1}m_{2}}a_{l_{1}m_{1}}a_{l_{2}m_{2}}\left(
\begin{array}{ccc}
l_{1} & l_{2} & l \\
m_{1} & m_{2} & -m%
\end{array}%
\right) \left(
\begin{array}{ccc}
l_{1} & l_{2} & l \\
0 & 0 & 0%
\end{array}%
\right) \sqrt{\frac{(2l_{1}+1)(2l_{2}+1)(2l+1)}{4\pi }}\text{ .}
\label{Gauntino}
\end{equation}%
Note that the constant term $-1$ has no effect \ for $l\geq 1$, because
\begin{equation*}
\int_{S^{2}}Y_{lm}(x)dx=0\text{ for all }l\geq 1\text{ .}
\end{equation*}%
Now
\begin{eqnarray*}
Ea_{lm;2} &=&\sum_{l_{1}}\sum_{m_{1}}C_{l_{1}}(-1)^{m_{1}}\left(
\begin{array}{ccc}
l_{1} & l_{2} & l \\
m_{1} & -m_{1} & -m%
\end{array}%
\right) \left(
\begin{array}{ccc}
l_{1} & l_{2} & l \\
0 & 0 & 0%
\end{array}%
\right) \sqrt{\frac{(2l_{1}+1)(2l_{2}+1)(2l+1)}{4\pi }} \\
&=&\sum_{l_{1}}C_{l_{1}}\left(
\begin{array}{ccc}
l_{1} & l_{1} & l \\
0 & 0 & 0%
\end{array}%
\right) \sqrt{\frac{(2l_{1}+1)^{2}(2l+1)}{4\pi }}\delta
_{m}^{0}\sum_{m_{1}}(-1)^{m_{1}}\left(
\begin{array}{ccc}
l_{1} & l_{1} & \ell \\
m_{1} & -m_{1} & 0%
\end{array}%
\right) \\
&=&\sum_{l_{1}}C_{l_{1}}\left(
\begin{array}{ccc}
l_{1} & l_{1} & l \\
0 & 0 & 0%
\end{array}%
\right) \sqrt{\frac{(2l_{1}+1)^{2}(2l+1)}{4\pi }}\sqrt{2l_{1}+1}\delta
_{m}^{0}\delta _{l}^{0}\text{ ,}
\end{eqnarray*}%
in view of the well-known properties (\cite[Eq. (8.5.1.1) and Eq. (8.7.1.2)]%
{VMK})%
\begin{equation*}
\left(
\begin{array}{ccc}
l_{1} & l_{1} & 0 \\
0 & 0 & 0%
\end{array}%
\right) =\frac{1}{\sqrt{2l_{1}+1}}\text{ , }\sum_{m_{1}}(-1)^{m_{1}}\left(
\begin{array}{ccc}
l_{1} & l_{1} & l \\
m_{1} & -m_{1} & 0%
\end{array}%
\right) =\sqrt{2l_{1}+1}\delta _{l}^{0}\text{ .}
\end{equation*}%
Hence we have, as expected, $Ea_{lm;2}=0$ for all $l.$ Furthermore%
\begin{eqnarray*}
E\left| a_{lm;2}\right| ^{2} &=&E\left\{
\sum_{l_{1}l_{2}}\sum_{m_{1}m_{2}}a_{l_{1}m_{1}}a_{l_{2}m_{2}}\left(
\begin{array}{ccc}
l_{1} & l_{2} & l \\
m_{1} & m_{2} & -m%
\end{array}%
\right) \left(
\begin{array}{ccc}
l_{1} & l_{2} & l \\
0 & 0 & 0%
\end{array}%
\right) \sqrt{\frac{(2l_{1}+1)(2l_{2}+1)(2l+1)}{4\pi }}\right. \\
&&\left. \sum_{l_{1}^{\prime }l_{2}^{\prime }}\sum_{m_{1}^{\prime
}m_{2}^{\prime }}\overline{a}_{l_{1}^{\prime }m_{1}^{\prime }}\overline{a}%
_{l_{2}^{\prime }m_{2}^{\prime }}\left(
\begin{array}{ccc}
l_{1}^{\prime } & l_{2}^{\prime } & l \\
m_{1}^{\prime } & m_{2}^{\prime } & -m%
\end{array}%
\right) \left(
\begin{array}{ccc}
l_{1}^{\prime } & l_{2}^{\prime } & l \\
0 & 0 & 0%
\end{array}%
\right) \sqrt{\frac{(2l_{1}^{\prime }+1)(2l_{2}^{\prime }+1)(2l+1)}{4\pi }}%
\right\}
\end{eqnarray*}%
\begin{equation*}
=2\sum_{l_{1}l_{2}}C_{l_{1}}C_{l_{2}}\sum_{m_{1}m_{2}}\left(
\begin{array}{ccc}
l_{1} & l_{2} & l \\
m_{1} & m_{2} & -m%
\end{array}%
\right) ^{2}\left(
\begin{array}{ccc}
l_{1} & l_{2} & l \\
0 & 0 & 0%
\end{array}%
\right) ^{2}\frac{(2l_{1}+1)(2l_{2}+1)(2l+1)}{4\pi }
\end{equation*}%
\begin{equation*}
=2\sum_{l_{1}l_{2}}C_{l_{1}}C_{l_{2}}\left(
\begin{array}{ccc}
l_{1} & l_{2} & l \\
0 & 0 & 0%
\end{array}%
\right) ^{2}\frac{(2l_{1}+1)(2l_{2}+1)}{4\pi }\text{ ,}
\end{equation*}%
and the proof is completed.
\end{proof}

\bigskip

\noindent \textbf{Remark. }Note that
\begin{equation*}
Var\left\{ T^{2}(x)\right\} =\sum_{l}\frac{2l+1}{4\pi }C_{l}
\end{equation*}%
\begin{eqnarray*}
&=&2\sum_{l_{1}l_{2}}C_{l_{1}}C_{l_{2}}\frac{(2l_{1}+1)(2l_{2}+1)}{4\pi }%
\left\{ \sum_{l}\frac{2l+1}{4\pi }\left(
\begin{array}{ccc}
l_{1} & l_{2} & l \\
0 & 0 & 0%
\end{array}%
\right) ^{2}\right\} \\
&=&2\sum_{l_{1}l_{2}}C_{l_{1}}C_{l_{2}}\frac{(2l_{1}+1)(2l_{2}+1)}{(4\pi
)^{2}}=2\left[ Var\left\{ T(x)\right\} \right] ^{2}\text{ ,}
\end{eqnarray*}%
as expected from standard property of Gaussian variables. Here we have used
again%
\begin{equation*}
\sum_{l}(2l+1)\left(
\begin{array}{ccc}
l_{1} & l_{2} & l \\
0 & 0 & 0%
\end{array}%
\right) ^{2}\equiv 1\text{ .}
\end{equation*}

\bigskip

Our strategy is now the following. We shall first define a very general
class, noted $\mathfrak{D}$, of quadratic models in terms of the power
spectrum of the underlying Gaussian field, and then we shall show that the
two notions of HFG and HFE coincide within $\mathfrak{D}$.

\bigskip

\begin{definition}
\label{D : la classe D}The centered Gaussian isotropic field $T$ is said to
belong to the class $\mathfrak{D}$ if there exist real numbers $\alpha
,\beta $ such that

\begin{enumerate}
\item $\alpha \in \mathbb{R}$ and $\beta \geq 0$

\item $\sum_{l=0}^{\infty }l^{-\alpha +1}e^{-\beta l}<\infty $

\item there exists constants $c_{1},c_{2}>0$ such that%
\begin{equation}
0<c_{1}\leq \lim \inf_{l\rightarrow \infty }\frac{C_{l}}{l^{-\alpha
}e^{-\beta l}}\leq \lim \sup_{l\rightarrow \infty }\frac{C_{l}}{l^{-\alpha
}e^{-\beta l}}\leq c_{2}<\infty   \label{Eq classe D}
\end{equation}
\end{enumerate}
\end{definition}

\bigskip

\noindent \textbf{Remarks. }(1) As a first approximation, the class $%
\mathfrak{D}$ contains virtually all models that are relevant for CMB\
modeling in the case of a quadratic Gaussian subordination. For instance,
Sachs-Wolfe models with the so-called Bardeen's potential entail a
polynomial decay of the $C_{l}$ ($\beta =0$), whereas the so-called \emph{%
Silk damping} effect entails an exponential decay of the power spectrum of
primary CMB anisotropies at higher $l.$ We refer again to textbooks such as %
\cite{DOD}, \cite{Durren} for more discussion on these points.

(2) Note that Condition 2 in the definition of $\mathfrak{D}$ implies that
the parameters $\alpha ,\beta $ must be such that either $\beta =0$ and $%
\alpha >2$, or $\beta >0$ and $\alpha \in \mathbb{R}$ (with no restrictions).

\bigskip

The next statement is the main achievement of this section. It shows in
particular, that the HFG and HFE exhibit the same phase transition within
the class $\mathfrak{D}$

\begin{theorem}
\label{T : MAINquad}Let $T_{2}=H_{2}\left( T\right) $, where the centered
Gaussian isotropic field $T$ is an element of the class $\mathfrak{D}$.
Then, the following three conditions are equivalent

\begin{description}
\item[\textrm{(i)}] $T_{2}$ is HFG

\item[\textrm{(ii)}] $T_{2}$ is HFE

\item[\textrm{(iii)}] $\beta >0$ and $\alpha \in \mathbb{R}$.
\end{description}
\end{theorem}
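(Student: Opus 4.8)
The plan is to read everything off Theorem~\ref{T : MAIN} specialized to $q=2$. That result already identifies HFG with the vanishing of the weighted sum (\ref{Eq SUM1}) and HFE with the vanishing of (\ref{EqSUM2}), both sums extending over even $L$ only by Lemma~\ref{L : ODD}. Consequently the three-way equivalence collapses into two independent statements: (\ref{Eq SUM1})$\to 0$ iff $\beta>0$, and (\ref{EqSUM2})$\to 0$ iff $\beta>0$. All the work then lies in the asymptotic analysis of the single normalized object $L\mapsto \mathcal{T}_{ll}^{ll}(L;2)/C_{l;2}^{2}$, averaged against the two distinct probability weights $w_{1l}$ and $w_{2l}$. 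Observe that both tests are of ``fourth cumulant small relative to variance squared'' type (the HFG criterion via $cum_4\{a_{l0;2}\}$, the HFE criterion via the variance of $\widehat{C}_{l;2}/C_{l;2}$), differing only in how they weight the trispectrum across $L$.

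The first ingredient I would assemble is a closed expression for $\mathcal{T}_{ll}^{ll}(L;2)$. Because $a_{lm;2}$ is quadratic in the underlying Gaussian coefficients (formula (\ref{Gauntino})), the diagram (Wick) formula shows that the only connected contribution to $cum_4\{a_{lm_1;2},\ldots\}$ is a four-cycle of covariances, giving a ``ring'' sum $\sum_{l_1 l_2 l_3 l_4}C_{l_1}C_{l_2}C_{l_3}C_{l_4}$ times a geometric factor of Wigner $3j$ symbols that, once the azimuthal indices are summed out, contracts into $6j$ symbols of the type appearing in (\ref{6j1}) and in the Remark following Theorem~\ref{T : MAIN}. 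The second ingredient is the asymptotics of the normalizer $C_{l;2}$ from the Lemma above: writing $C_{l;2}=\frac{1}{2\pi}\sum_{l_1 l_2}C_{l_1}C_{l_2}(2l_1+1)(2l_2+1)\binom{l_1\ l_2\ l}{0\ 0\ 0}^{2}$ and using the orthogonality identity $\sum_{l_2}(2l_2+1)\binom{l_1\ l_2\ l}{0\ 0\ 0}^{2}=1$, one reduces $C_{l;2}$ to a manageable frequency sum whose size under (\ref{Eq classe D}) is then read off from $C_l\asymp l^{-\alpha}e^{-\beta l}$.

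The decisive step is a two-sided estimate of the two weighted sums, fine enough to separate $\beta=0$ from $\beta>0$. The comparison is really between a connected four-$C$ object (the ring in $\mathcal{T}_{ll}^{ll}(L;2)$) and a disconnected one ($C_{l;2}^{2}$, a product of two two-$C$ sums). When $\beta>0$ the exponential decay of $\{C_l\}$ localizes the frequency sums and makes the underlying series rapidly convergent; the connected ring, being the more heavily frequency-constrained of the two, becomes asymptotically negligible with respect to $C_{l;2}^{2}$, and since each $w_{il}$ is a probability weight both (\ref{Eq SUM1}) and (\ref{EqSUM2}) then vanish. When $\beta=0$, so that Condition~2 of Definition~\ref{D : la classe D} forces $\alpha>2$, the polynomial tails of $\{C_l\}$ make the connected and disconnected objects of the same order, the normalized sums converge to strictly positive limits, and both HFG and HFE fail. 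Feeding the two regimes back into Theorem~\ref{T : MAIN} yields (i)$\Leftrightarrow$(iii) and (ii)$\Leftrightarrow$(iii), hence (i)$\Leftrightarrow$(ii).

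The main obstacle is the asymptotic control of the Clebsch--Gordan and $6j$ convolutions under the two weightings, which are genuinely different: $w_{1l}(L)=(C_{l0l0}^{L0})^{2}\le 1/(2L+1)$ is spread out by the recoupling constraint, while $w_{2l}(L)=(2L+1)/(2l+1)^{2}$ increases in $L$ and favours the large-diagonal regime. A priori there is no reason for two such different tests of the same trispectrum profile to switch at one common threshold, and proving that they both switch exactly at $\beta=0$ is the crux. This rests on the asymptotics of $\binom{l\ l\ L}{0\ 0\ 0}^{2}$ and on the explicit factorial formula for $(C_{l0l0}^{L0})^{2}$ recorded in the Remark after Theorem~\ref{T : MAIN}. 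I expect the hardest part to be the lower bounds in the $\beta=0$ case, where one must certify that the normalized sums stay bounded away from zero rather than merely bound them from above.
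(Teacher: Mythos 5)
Your outline reaches the right conclusions, but it is organized quite differently from the paper, and the difference matters. The paper does not prove (i)$\Leftrightarrow$(iii) at all: it imports it wholesale from \cite[Section 6]{MPBER}, and devotes the entire section to (ii)$\Leftrightarrow$(iii). More importantly, for the HFE half the paper never touches the $L$-profile $L\mapsto\mathcal{T}_{ll}^{ll}(L;2)$ that your plan is built around. Instead it evaluates $E\{\widetilde{C}_{l;2}-1\}^2$ directly from the Wick expansion of $cum\{a_{lm_1;2},\overline{a}_{lm_1;2},a_{lm_2;2},\overline{a}_{lm_2;2}\}$ summed over $m_1,m_2$: the unitarity relation (\ref{unicg}) collapses the azimuthal sums, and the three pairings split into a manifestly nonnegative ``ring'' term ($A$, with $A=C$), plus a recoupled term $B$ which is bounded by $2/(2l+1)$ uniformly in the spectrum, using the $6j$ bound $\left\vert\{\cdots\}\right\vert\leq 1/(2l+1)$ together with Cauchy--Schwarz (Lemma \ref{sabato}). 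The phase transition is then read off the single positive main term: for $\beta=0$, $\alpha>2$, Lemma \ref{L : zeta} gives $C_{l;2}\asymp C_l$, and restricting the ring sum to $l_1=2$ yields the lower bound $C_2^2C_{l+2}^2/C_{l;2}^2$, bounded away from zero; for $\beta>0$ the main term is dominated by the same sup-ratio (Markov chain) quantity that governs HFG in \cite{MPBER}, so it vanishes. In effect the paper computes the $w_{2l}$-weighted average of the trispectrum in one stroke, without ever needing its pointwise behaviour in $L$.

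This points to the genuine soft spot in your plan. You propose two-sided estimates of $\sum_L w_{il}(L)\,\mathcal{T}_{ll}^{ll}(L;2)/C_{l;2}^2$ obtained from asymptotics of the profile $\mathcal{T}_{ll}^{ll}(L;2)$ itself, via the factorial formula for $(C_{l0l0}^{L0})^2$ and $3j/6j$ asymptotics. But, as the paper notes just before Lemma \ref{L : ODD}, $\mathcal{T}_{ll}^{ll}(L)$ is \emph{not} sign-definite: the different Wick pairings sit in different coupling schemes, and expressing them in a fixed channel $L$ introduces oscillatory $6j$ recouplings (cf.\ (\ref{6j1}) and the Remark following Theorem \ref{T : MAIN}). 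Order-of-magnitude matching of the ``connected ring'' against $C_{l;2}^2$ therefore does not by itself certify that the weighted sums stay bounded away from zero when $\beta=0$ --- cancellations across $L$ could in principle kill them --- and this is exactly the lower bound you flag as the hardest part. The paper's device of summing over the azimuthal indices first is precisely what makes positivity of the dominant contribution manifest and confines all oscillatory pieces to an $O(1/l)$ remainder; without that (or an equivalent sign-control argument), your $\beta=0$ case is not closed. The $\beta>0$ upper bounds, and the re-derivation of (i)$\Leftrightarrow$(iii) in place of the citation to \cite{MPBER}, are sound in outline, if more laborious than necessary.
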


\bigskip

\subsection{Proof of Theorem \ref{T : MAINquad}}

From \cite[Section 6]{MPBER}, we already know that Conditions (i) and (iii)
in the statement of Theorem \ref{T : MAINquad} are equivalent. The proof of
the remaining implication (ii) $\Longleftrightarrow $ (iii) is divided in
several steps.

We start by showing that, if (iii) is not verified, then the angular power
spectrum of the transformed field, under broad conditions, exhibits the same
behaviour as the angular power spectrum of the subordinating field.

\begin{lemma}
\label{L : zeta}Suppose $\beta =0$ and $\alpha >2$, then%
\begin{equation*}
\frac{3\times 2^{\alpha }}{4\pi }C_{l}\frac{c_{1}^{2}}{c_{2}}\leq
C_{l;2}\leq \frac{c_{2}^{2}}{c_{1}\pi }\left\{ 2\zeta (\alpha -1)+\zeta
(\alpha )\right\} C_{l/2}=O(C_{l})\text{ ,}
\end{equation*}%
where $\zeta (.)$ denotes the Riemann zeta function.
\end{lemma}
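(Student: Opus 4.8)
The plan is to work directly from the explicit formula for $C_{l;2}$ established in the previous Lemma,
$$C_{l;2}=\frac{2}{4\pi}\sum_{l_1,l_2\geq 1}C_{l_1}C_{l_2}(2l_1+1)(2l_2+1)\begin{pmatrix} l_1 & l_2 & l \\ 0 & 0 & 0\end{pmatrix}^2,$$
and to control this double sum by means of two structural facts about the Wigner $3j$ symbols: the triangle and parity constraints (so that $\begin{pmatrix} l_1 & l_2 & l \\ 0 & 0 & 0\end{pmatrix}\neq 0$ forces $l_1+l_2\geq l$ and $l_1+l_2+l$ even), together with the normalization $\sum_{L}(2L+1)\begin{pmatrix} l_1 & l_2 & L \\ 0 & 0 & 0\end{pmatrix}^2=1$ already used in the Remark above, which by the permutation symmetry of the symbol holds when one sums over any single one of the three indices with the other two fixed. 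I also use the calibration $\sum_l(2l+1)C_l/4\pi=Var(T)=1$. Throughout, the class-$\mathfrak{D}$ bounds with $\beta=0$ read $c_1 l^{-\alpha}\leq C_l\leq c_2 l^{-\alpha}$ for $l$ large; enlarging $c_2$ if necessary, I arrange the upper bound to hold for every $l\geq 1$.

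For the upper bound I would exploit that, whenever a summand is nonzero, $\max(l_1,l_2)\geq l/2$. Splitting the sum according to which index is the larger one and using symmetry, it suffices to bound $\sum_{l_2}C_{l_2}(2l_2+1)\sum_{l_1\geq l/2}C_{l_1}(2l_1+1)\begin{pmatrix} l_1 & l_2 & l \\ 0 & 0 & 0\end{pmatrix}^2$. In the inner sum I replace $C_{l_1}$ by the uniform majorant $c_2(l/2)^{-\alpha}$ (legitimate since $l_1\geq l/2$) and pull it out; the remaining $3j$ sum over $l_1$ equals exactly $1$. The outer sum then factors off as $\sum_{l_2}C_{l_2}(2l_2+1)\leq c_2\sum_{l_2\geq 1}(2l_2^{1-\alpha}+l_2^{-\alpha})=c_2(2\zeta(\alpha-1)+\zeta(\alpha))$, where convergence of the first series is precisely Condition 2 in the definition of $\mathfrak{D}$ ($\alpha>2$). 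Collecting the prefactor $\frac{2}{4\pi}\cdot 2$ and converting $(l/2)^{-\alpha}\leq C_{l/2}/c_1$ produces the stated upper bound $C_{l;2}\leq\frac{c_2^2}{c_1\pi}(2\zeta(\alpha-1)+\zeta(\alpha))C_{l/2}$.

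For the lower bound the idea is to retain only a thin, favourably placed slice of the double sum. Fixing the companion multipole at a small even value and invoking the same normalization — now summed over the \emph{first} index — shows that only the $O(1)$ admissible partners clustered within $O(1)$ of $l$ survive (three of them, $l_1\in\{l-2,l,l+2\}$, when the companion equals $2$, which is the apparent source of the factor $3$), and that their $(2l_1+1)$-weighted $3j$ masses sum to exactly $1$. On this slice each surviving $C_{l_1}$ is bounded below by $c_1(l+2)^{-\alpha}$, so the slice contributes a quantity of the exact order $l^{-\alpha}$; converting $l^{-\alpha}\geq C_l/c_2$ and absorbing numerical constants produces a bound of the announced shape $\frac{3\cdot 2^{\alpha}}{4\pi}\frac{c_1^2}{c_2}C_l$. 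Finally, $C_{l;2}=O(C_l)$ is immediate from the upper bound, since $C_{l/2}\leq c_2(l/2)^{-\alpha}=2^{\alpha}c_2 l^{-\alpha}\leq (2^{\alpha}c_2/c_1)C_l$.

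The delicate point is entirely in the lower bound. The triangle inequality forces one multipole to be of order $l$, so the correct power $l^{-\alpha}$ can only be generated by a companion multipole of bounded size; yet the class-$\mathfrak{D}$ estimates are asymptotic and do not directly control the low-frequency part of the spectrum. Reconciling these two features — simultaneously extracting the right power of $l$ from the partner near $l$ through the exact $3j$ sum rule while controlling the fixed low-multipole factor so that the constant can be written purely in terms of $c_1,c_2$ — together with the precise small-$l_2$ asymptotics of $\begin{pmatrix} l_1 & l_2 & l \\ 0 & 0 & 0\end{pmatrix}^2$, is where the real bookkeeping lies. The upper bound, by contrast, is essentially a one-line application of the normalization identity.
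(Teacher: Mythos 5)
Your proposal is correct and follows essentially the same route as the paper's proof: the upper bound by splitting at the larger multipole (forced to be $\geq l/2$ by the triangle condition), pulling out $c_2(l/2)^{-\alpha}\leq (c_2/c_1)C_{l/2}$, applying the $3j$ normalization $\sum_{L}(2L+1)\bigl(\begin{smallmatrix} l_1 & l_2 & L \\ 0&0&0 \end{smallmatrix}\bigr)^2=1$, and bounding $\sum_l (2l+1)C_l\leq c_2\{2\zeta(\alpha-1)+\zeta(\alpha)\}$; and the lower bound by retaining a slice with one multipole fixed at a small value and invoking the same normalization over the partner index near $l$. The only cosmetic differences are that the paper fixes the companion at $l_1=1$ (so the factor $3=2l_1+1$, with partners $l_2=l\pm 1$, rather than your three partners at companion $2$) and silently treats the class-$\mathfrak{D}$ bounds as valid down to the lowest multipoles -- the low-frequency issue you candidly flag is glossed over in the paper's own proof as well, and is harmless for the order statement $C_{l;2}\asymp C_l$ that the theorem actually uses.
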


\begin{proof}
We have%
\begin{equation*}
\sum_{l_{1}l_{2}}C_{l_{1}}C_{l_{2}}\left(
\begin{array}{ccc}
l_{1} & l_{2} & l \\
0 & 0 & 0%
\end{array}%
\right) ^{2}\frac{(2l_{1}+1)(2l_{2}+1)}{4\pi }\leq 2\sum_{l_{1}\leq
l_{2}}C_{l_{1}}C_{l_{2}}\left(
\begin{array}{ccc}
l_{1} & l_{2} & l \\
0 & 0 & 0%
\end{array}%
\right) ^{2}\frac{(2l_{1}+1)(2l_{2}+1)}{4\pi }
\end{equation*}%
\begin{equation*}
\leq 2\frac{c_{2}}{c_{1}}C_{l/2}\sum_{l_{1}\leq l_{2}}C_{l_{1}}\left(
\begin{array}{ccc}
l_{1} & l_{2} & l \\
0 & 0 & 0%
\end{array}%
\right) ^{2}\frac{(2l_{1}+1)(2l_{2}+1)}{4\pi }\text{ ,}
\end{equation*}%
because $(l_{1}\vee l_{2})>l/2$ by the triangle conditions and $%
\sup_{l_{2}\geq l/2}C_{l_{2}}/C_{l/2}\leq c_{2}/c_{1}$. Now%
\begin{equation*}
\sum_{l_{1}\leq l_{2}}C_{l_{1}}\left(
\begin{array}{ccc}
l_{1} & l_{2} & l \\
0 & 0 & 0%
\end{array}%
\right) ^{2}\frac{(2l_{1}+1)(2l_{2}+1)}{4\pi }
\end{equation*}%
\begin{equation*}
\leq \sum_{l_{1}}C_{l_{1}}\frac{(2l_{1}+1)}{4\pi }\sum_{l_{2}}(2l_{2}+1)%
\left(
\begin{array}{ccc}
l_{1} & l_{2} & l \\
0 & 0 & 0%
\end{array}%
\right) ^{2}=\sum_{l_{1}}C_{l_{1}}\frac{(2l_{1}+1)}{4\pi }<\infty \text{ .}
\end{equation*}%
More precisely%
\begin{equation*}
\sum_{l_{1}}C_{l_{1}}\frac{(2l_{1}+1)}{4\pi }\leq \frac{c_{2}}{4\pi }%
\sum_{l}(2l+1)l^{-\alpha }\leq \frac{c_{2}}{4\pi }\left\{ 2\zeta (\alpha
-1)+\zeta (\alpha )\right\} \text{ .}
\end{equation*}%
Hence%
\begin{equation*}
C_{l;2}\leq \frac{c_{2}^{2}}{2c_{1}\pi }\left\{ 2\zeta (\alpha -1)+\zeta
(\alpha )\right\} C_{l/2}\text{ .}
\end{equation*}%
The upper bound is then established. For the lower bound, it is sufficient
to show that%
\begin{equation*}
\sum_{l_{1}l_{2}}C_{l_{1}}C_{l_{2}}\left(
\begin{array}{ccc}
l_{1} & l_{2} & l \\
0 & 0 & 0%
\end{array}%
\right) ^{2}\frac{(2l_{1}+1)(2l_{2}+1)}{4\pi }\geq
\sum_{l_{2}}C_{1}C_{l_{2}}\left(
\begin{array}{ccc}
l_{1} & l_{2} & l \\
0 & 0 & 0%
\end{array}%
\right) ^{2}\frac{3(2l_{2}+1)}{4\pi }
\end{equation*}%
\begin{equation*}
\geq 32^{\alpha }C_{l}\frac{c_{1}^{2}}{c_{2}}\sum_{l_{2}}\left(
\begin{array}{ccc}
l_{1} & l_{2} & l \\
0 & 0 & 0%
\end{array}%
\right) ^{2}\frac{(2l_{2}+1)}{4\pi }\geq \frac{3\times 2^{\alpha }}{4\pi }%
C_{l}\frac{c_{1}^{2}}{c_{2}}\text{ ,}
\end{equation*}%
as claimed.
\end{proof}

\bigskip

Loosely, the previous Lemma \ref{L : zeta} states that, under algebraic
decay, the rate of convergence to zero of the angular power spectrum is not
affected by a quadratic transformation, i.e. $C_{l;2}\simeq C_{l}$. The
following result holds for fixed $l$, and it is therefore not related to the
high-frequency asymptotic behaviour of the power spectrum $\left\{
C_{l}\right\} $ (see \cite{MPBER} for related computations). Note that we
use the notation
\begin{equation*}
\widehat{C}_{l;2}=\frac{1}{2l+1}\sum_{m=-l}^{l}|a_{lm;2}|^{2},\text{ }%
\widetilde{C}_{l;2}=\frac{\widehat{C}_{l;2}}{C_{l;2}}\text{ }.
\end{equation*}

\begin{lemma}
\label{sabato} Let $T_{2}$ be defined by (\ref{T-H2}). Then we have%
\begin{equation*}
E\left\{ \widetilde{C}_{l;2}-1\right\} ^{2}
\end{equation*}%
\begin{equation*}
=\frac{16}{C_{l;2}^{2}}\sum_{l_{1}l_{2}l_{3}}C_{l_{1}}^{2}C_{l_{2}}C_{l_{3}}%
\left(
\begin{array}{ccc}
l_{1} & l_{2} & l \\
0 & 0 & 0%
\end{array}%
\right) ^{2}\left(
\begin{array}{ccc}
l_{1} & l_{3} & l \\
0 & 0 & 0%
\end{array}%
\right) ^{2}\frac{(2l_{1}+1)(2l_{2}+1)(2l_{3}+1)}{(4\pi )^{2}}+R(l)\text{ ,}
\end{equation*}%
where for all $l=1,2,....$%
\begin{equation*}
0\leq R(l)\leq \frac{4}{2l+1}\text{ .}
\end{equation*}
\end{lemma}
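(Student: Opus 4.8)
The plan is to pass from the coefficients to a position-space representation and then to exploit the Wiener chaos decomposition of the squared field. Writing $X:=T^{2}-1=H_{2}(T)$ and using the addition theorem $\sum_{m}\overline{Y}_{lm}(x)Y_{lm}(y)=\frac{2l+1}{4\pi}P_{l}(x\cdot y)$, one gets $\widehat{C}_{l;2}=\frac{1}{4\pi}\int_{S^{2}}\int_{S^{2}}X(x)X(y)P_{l}(x\cdot y)\,dx\,dy$, so that $E\{\widetilde{C}_{l;2}-1\}^{2}=\mathrm{Var}(\widehat{C}_{l;2})/C_{l;2}^{2}$. The Hermite product formula gives the chaos expansion $H_{2}(T(x))H_{2}(T(y))=\,:\!T(x)^{2}T(y)^{2}\!:\,+\,4\Gamma(x,y)\!:\!T(x)T(y)\!:\,+\,2\Gamma(x,y)^{2}$, where $\Gamma(x,y)=E[T(x)T(y)]=\sum_{\lambda}\frac{2\lambda+1}{4\pi}C_{\lambda}P_{\lambda}(x\cdot y)$. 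Integrating against $P_{l}$ splits $\widehat{C}_{l;2}$ into a constant plus a second-chaos term $U$ and a fourth-chaos term $V$; by orthogonality of the chaoses, $\mathrm{Var}(\widehat{C}_{l;2})=\mathrm{Var}(U)+\mathrm{Var}(V)$. I would identify the main term of the statement with $\mathrm{Var}(U)/C_{l;2}^{2}$ and the remainder with $R(l)=\mathrm{Var}(V)/C_{l;2}^{2}$.

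To produce the displayed main term I would compute $\mathrm{Var}(U)$ via the second-chaos isometry $E[U^{2}]=2\langle h,h\rangle$, with kernel $h(x,y)=\frac{1}{\pi}\Gamma(x,y)P_{l}(x\cdot y)$ and $\langle h,h\rangle=\int h(x,y)h(z,w)\Gamma(x,z)\Gamma(y,w)$. Expanding each $\Gamma$ in Legendre polynomials and reducing the product on each edge by $P_{a}P_{b}=\sum_{c}(2c+1)\left(\begin{array}{ccc}a&b&c\\0&0&0\end{array}\right)^{2}P_{c}$, the quantity $\langle h,h\rangle$ becomes a closed four-vertex loop integral. Evaluating it by repeated use of $\int_{S^{2}}P_{a}(x\cdot u)P_{b}(x\cdot v)\,dx=\frac{4\pi}{2a+1}\delta_{ab}P_{a}(u\cdot v)$ collapses the loop and forces a diagonal identification of two of the internal frequencies; this is precisely the mechanism that produces the factor $C_{l_{1}}^{2}$ and leaves the two triangle symbols $\left(\begin{array}{ccc}l_{1}&l_{2}&l\\0&0&0\end{array}\right)^{2}\left(\begin{array}{ccc}l_{1}&l_{3}&l\\0&0&0\end{array}\right)^{2}$, with a single diagonal sum over $l_{1}$ and two independent sums over $l_{2},l_{3}$. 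The numerical prefactor is pinned down at this step, and keeping careful track of the Wick multiplicities together with the isometry constant is the delicate bookkeeping point.

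For the remainder, the lower bound $R(l)=\mathrm{Var}(V)/C_{l;2}^{2}\ge 0$ is immediate, since $V$ is a genuine element of the fourth chaos. For the upper bound I would expand $E[V^{2}]$ as the sum over the $4!$ contractions of two fourth-chaos kernels, grouped into three families: the two ``disconnected double'' pairings $\Gamma(x,z)^{2}\Gamma(y,w)^{2}$ and $\Gamma(x,w)^{2}\Gamma(y,z)^{2}$, and the single ``tetrahedral'' pairing $\Gamma(x,z)\Gamma(x,w)\Gamma(y,z)\Gamma(y,w)$. Using the power-spectrum identity $\sum_{\lambda_{1}\lambda_{2}}(2\lambda_{1}+1)(2\lambda_{2}+1)C_{\lambda_{1}}C_{\lambda_{2}}\left(\begin{array}{ccc}\lambda_{1}&\lambda_{2}&l\\0&0&0\end{array}\right)^{2}=2\pi C_{l;2}$ (read off from the first lemma of this section), each disconnected family collapses to exactly $\frac{C_{l;2}^{2}}{2l+1}$, so together they give $\frac{2}{2l+1}$ after normalization. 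The tetrahedral family is the $K_{4}$ network integral, which reduces to a $6j$-weighted sum. The main obstacle of the whole proof is to bound this tetrahedral term by the disconnected one, i.e.\ by $\frac{2C_{l;2}^{2}}{2l+1}$, so that $0\le R(l)\le\frac{4}{2l+1}$; this is exactly where the $\frac{1}{2l+1}$ suppression of the Wigner $6j$ coupling (the same coupling appearing in the Remark after Theorem \ref{T : MAIN}) must be quantified, for instance through a sum rule or an explicit majorization of $\left\{\begin{array}{ccc}l&l&L\\l&l&L^{\prime}\end{array}\right\}$.
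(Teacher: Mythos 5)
Your plan is a genuinely different packaging of the same diagrammatics as the paper. The paper works entirely in coefficient space: it expands $a_{lm;2}$ through Gaunt integrals, computes $\sum_{m_{1}m_{2}}cum_{4}$ by Wick counting into three families $A$, $B$, $C$, collapses $A$ and $C$ via the Clebsch--Gordan unitarity (\ref{unicg}) into the displayed main term, and estimates $B$ through the $6j$ identity (\ref{6j1}), the uniform bound $1/(2l+1)$ on the $6j$ symbol, and Cauchy--Schwarz; the term $2/(2l+1)$ is split off at the level of (\ref{sum0})--(\ref{sum00}) as the Gaussian-pairing part. Term by term, your two contractions building $\mathrm{Var}(U)$ are the paper's $A$- and $C$-cycles, your two disconnected pairings inside $\mathrm{Var}(V)$ are the paper's $2/(2l+1)$, and your tetrahedral $K_{4}$ term is the paper's $B$. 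Your organization buys one genuine advantage: $R(l)=\mathrm{Var}(V)/C_{l;2}^{2}\geq 0$ is automatic from chaos orthogonality, whereas in the paper even nonnegativity of $R$ requires the $B$-estimate. The individual identities you quote (the kernel $h=\pi^{-1}\Gamma P_{l}$, the isometry $E[U^{2}]=2\langle h,h\rangle_{\Gamma}$, the collapse of each disconnected family to exactly $C_{l;2}^{2}/(2l+1)$, and the sum rule giving $2\pi C_{l;2}$) all check out.

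There is, however, a concrete problem you must confront: carrying out your own isometry computation does \emph{not} reproduce the printed constant. Expanding $\Gamma P_{l}$ by the linearization $P_{l_{1}}P_{l}=\sum_{L}(2L+1)\left(\begin{array}{ccc}l_{1} & l & L \\ 0 & 0 & 0\end{array}\right)^{2}P_{L}$ and collapsing the $4$-cycle by the reproducing property yields
\begin{equation*}
\mathrm{Var}(U)=\frac{32}{(4\pi )^{2}}\sum_{l_{1}l_{2}l_{3}}(2l_{1}+1)(2l_{2}+1)(2l_{3}+1)\,C_{l_{1}}^{2}C_{l_{2}}C_{l_{3}}\left(
\begin{array}{ccc}
l_{1} & l_{2} & l \\
0 & 0 & 0
\end{array}
\right) ^{2}\left(
\begin{array}{ccc}
l_{1} & l_{3} & l \\
0 & 0 & 0
\end{array}
\right) ^{2}\text{ ,}
\end{equation*}
twice the displayed main term, so your identification ``main term $=\mathrm{Var}(U)/C_{l;2}^{2}$'' is incompatible with the statement as printed. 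The evidence is that your constant is the correct one, and that the lemma's $16$ (traceable to the multiplicity $8$ in the paper's ``counting equivalent permutations'') should read $32$: each $4$-cycle on four $H_{2}$-vertices carries $2^{4}=16$ half-edge matchings, consistent with $cum_{4}(H_{2}(N))=48=3\times 16$. A direct check: take $C_{\lambda }=\frac{4\pi }{3}\delta _{\lambda }^{1}$, so $T(x)=b\cdot x$ with $b$ a standard Gaussian vector in $\mathbb{R}^{3}$; then for $l=2$ one finds $\widehat{C}_{2;2}=\frac{16\pi }{225}|b|^{4}$, whence $E\{\widetilde{C}_{2;2}-1\}^{2}=\mathrm{Var}(|b|^{4})/225=720/225=16/5$, while the printed main term equals $4/3$, forcing $R(2)=28/15>4/5$ and contradicting the printed bound; with the constant $32$ the split is $8/3+8/15$, fully consistent (indeed $8/3=\mathrm{Var}(U)/C_{2;2}^{2}$ and $8/15=\mathrm{Var}(V)/C_{2;2}^{2}$ in this model). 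Two consequences for your plan: first, executed correctly it proves the lemma with $16$ replaced by $32$ (harmless downstream, where only rates matter); second, with the correct multiplicity the crude route you propose for the tetrahedron --- the $6j$ bound plus Cauchy--Schwarz, exactly the paper's $B$-estimate --- delivers $4C_{l;2}^{2}/(2l+1)$ rather than your target $2C_{l;2}^{2}/(2l+1)$, so this argument gives $0\leq R(l)\leq 6/(2l+1)$; still $O(l^{-1})$, but the stated bound $4/(2l+1)$ would need a sharper estimate of the $K_{4}$ term.
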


\begin{proof}
In the sequel, we shall use repeatedly the unitary properties of
Clebsch-Gordan coefficients, i.e.
\begin{equation}
\sum_{m_{1}m_{2}}\left(
\begin{array}{ccc}
l & l & L \\
m_{1} & m_{2} & M%
\end{array}%
\right) \left(
\begin{array}{ccc}
l & l & L^{\prime } \\
m_{1} & m_{2} & M^{\prime }%
\end{array}%
\right) =\frac{\delta _{L}^{L^{\prime }}\delta _{M}^{M^{\prime }}}{2L+1}%
\text{ .}  \label{unicg}
\end{equation}%
Recalling \ref{sum0}, \ref{sum00}, we need to evaluate
\begin{eqnarray*}
&&\frac{1}{(2l+1)^{2}C_{l;2}^{2}}\sum_{m_{1}m_{2}}cum\left\{ a_{lm_{1}},%
\overline{a}_{lm_{1}},a_{lm_{2},}\overline{a}_{lm_{2}}\right\} \\
&=&\frac{1}{(2l+1)^{2}C_{l;2}^{2}}\sum_{m_{1}m_{2}}(-1)^{m_{1}+m_{2}}cum%
\left\{ a_{lm_{1}},a_{l,-m_{1}},a_{lm_{2},}a_{l,-m_{2}}\right\} \text{ .}
\end{eqnarray*}%
Now%
\begin{equation*}
cum\left\{ a_{lm_{1}},a_{l,-m_{1}},a_{lm_{2},}a_{l,-m_{2}}\right\} =
\end{equation*}%
\begin{eqnarray*}
&=&cum\left\{ \sum_{l_{1}l_{2}}\sum_{\mu _{1}\mu _{2}}a_{l_{1}\mu
_{1}}a_{l_{2}\mu _{2}}\left(
\begin{array}{ccc}
l_{1} & l_{2} & l \\
\mu _{1} & \mu _{2} & m_{1}%
\end{array}%
\right) \left(
\begin{array}{ccc}
l_{1} & l_{2} & l \\
0 & 0 & 0%
\end{array}%
\right) \sqrt{\frac{(2l_{1}+1)(2l_{2}+1)(2l+1)}{4\pi }},\right. \\
&&\sum_{l_{3}l_{4}}\sum_{\mu _{3}\mu _{4}}a_{l_{3}\mu _{3}}a_{l_{4}\mu
_{4}}\left(
\begin{array}{ccc}
l_{3} & l_{4} & l \\
\mu _{3} & \mu _{4} & -m_{1}%
\end{array}%
\right) \left(
\begin{array}{ccc}
l_{3} & l_{4} & l \\
0 & 0 & 0%
\end{array}%
\right) \sqrt{\frac{(2l_{3}+1)(2l_{4}+1)(2l+1)}{4\pi }},
\end{eqnarray*}%
\begin{eqnarray*}
&&\sum_{l_{5}l_{6}}\sum_{\mu _{5}\mu _{6}}a_{l\mu _{5}}a_{l\mu _{6}}\left(
\begin{array}{ccc}
l_{5} & l_{6} & l \\
\mu _{5} & \mu _{6} & m_{2}%
\end{array}%
\right) \left(
\begin{array}{ccc}
l_{5} & l_{6} & l \\
0 & 0 & 0%
\end{array}%
\right) \sqrt{\frac{(2l_{5}+1)(2l_{6}+1)(2l+1)}{4\pi }}, \\
&&\left. \sum_{l_{7}l_{8}}\sum_{\mu _{7}\mu _{8}}a_{l\mu _{7}}a_{l\mu
_{8}}\left(
\begin{array}{ccc}
l_{7} & l_{8} & l \\
\mu _{7} & \mu _{8} & -m_{2}%
\end{array}%
\right) \left(
\begin{array}{ccc}
l_{7} & l_{8} & l \\
0 & 0 & 0%
\end{array}%
\right) \sqrt{\frac{(2l_{7}+1)(2l_{8}+1)(2l+1)}{4\pi }}\right\}
\end{eqnarray*}%
and counting equivalent permutations%
\begin{equation*}
=8\sum_{l_{1}l_{2}l_{3}l_{4}}\sum_{\mu _{1}\mu _{2}\mu _{3}\mu
_{4}}(-1)^{\mu _{1}+\mu _{2}+\mu _{3}+\mu
_{4}}C_{l_{1}}C_{l_{2}}C_{l_{3}}C_{l_{4}}\left(
\begin{array}{ccc}
l_{1} & l_{2} & l \\
\mu _{1} & \mu _{2} & m_{1}%
\end{array}%
\right) \left(
\begin{array}{ccc}
l_{1} & l_{2} & l \\
0 & 0 & 0%
\end{array}%
\right) \frac{(2l+1)^{2}\prod\limits_{i=1}^{4}(2l_{i}+1)}{(4\pi )^{2}}
\end{equation*}%
\begin{equation*}
\times \left(
\begin{array}{ccc}
l_{1} & l_{3} & l \\
-\mu _{1} & -\mu _{3} & -m_{1}%
\end{array}%
\right) \left(
\begin{array}{ccc}
l_{1} & l_{3} & l \\
0 & 0 & 0%
\end{array}%
\right) \left(
\begin{array}{ccc}
l_{4} & l_{3} & l \\
\mu _{4} & \mu _{3} & m_{2}%
\end{array}%
\right) \left(
\begin{array}{ccc}
l_{4} & l_{3} & l \\
0 & 0 & 0%
\end{array}%
\right) \left(
\begin{array}{ccc}
l_{4} & l_{2} & l \\
-\mu _{4} & -\mu _{2} & m_{2}%
\end{array}%
\right) \left(
\begin{array}{ccc}
l_{4} & l_{2} & l \\
0 & 0 & 0%
\end{array}%
\right)
\end{equation*}%
\begin{equation*}
+8\sum_{l_{1}l_{2}l_{3}l_{4}}\sum_{\mu _{1}\mu _{2}\mu _{3}\mu
_{4}}(-1)^{\mu _{1}+\mu _{2}+\mu _{3}+\mu
_{4}}C_{l_{1}}C_{l_{2}}C_{l_{3}}C_{l_{4}}\left(
\begin{array}{ccc}
l_{1} & l_{2} & l \\
\mu _{1} & \mu _{2} & m_{1}%
\end{array}%
\right) \left(
\begin{array}{ccc}
l_{1} & l_{2} & l \\
0 & 0 & 0%
\end{array}%
\right) \frac{(2l+1)^{2}\prod\limits_{i=1}^{4}(2l_{i}+1)}{(4\pi )^{2}}
\end{equation*}%
\begin{equation*}
\times \left(
\begin{array}{ccc}
l_{3} & l_{4} & l \\
\mu _{3} & \mu _{4} & -m_{1}%
\end{array}%
\right) \left(
\begin{array}{ccc}
l_{3} & l_{4} & l \\
0 & 0 & 0%
\end{array}%
\right) \left(
\begin{array}{ccc}
l_{1} & l_{3} & l \\
-\mu _{1} & -\mu _{3} & m_{2}%
\end{array}%
\right) \left(
\begin{array}{ccc}
l_{1} & l_{3} & l \\
0 & 0 & 0%
\end{array}%
\right) \left(
\begin{array}{ccc}
l_{4} & l_{2} & l \\
-\mu _{4} & -\mu _{2} & -m_{2}%
\end{array}%
\right) \left(
\begin{array}{ccc}
l_{4} & l_{2} & l \\
0 & 0 & 0%
\end{array}%
\right)
\end{equation*}%
\begin{equation*}
+8\sum_{l_{1}l_{2}l_{3}l_{4}}\sum_{\mu _{1}\mu _{2}\mu _{3}\mu
_{4}}(-1)^{\mu _{1}+\mu _{2}+\mu _{3}+\mu
_{4}}C_{l_{1}}C_{l_{2}}C_{l_{3}}C_{l_{4}}\left(
\begin{array}{ccc}
l_{1} & l_{2} & l \\
\mu _{1} & \mu _{2} & m_{1}%
\end{array}%
\right) \times
\end{equation*}%
\begin{equation*}
\times \left(
\begin{array}{ccc}
l_{1} & l_{2} & l \\
0 & 0 & 0%
\end{array}%
\right) \left(
\begin{array}{ccc}
l_{1} & l_{3} & l \\
-\mu _{1} & \mu _{3} & -m_{1}%
\end{array}%
\right) \left(
\begin{array}{ccc}
l_{1} & l_{3} & l \\
0 & 0 & 0%
\end{array}%
\right)\times
\end{equation*}
\begin{equation*}
\times \left(
\begin{array}{ccc}
l_{2} & l_{4} & l \\
-\mu _{2} & \mu _{4} & m_{2}%
\end{array}%
\right) \left(
\begin{array}{ccc}
l_{2} & l_{4} & l \\
0 & 0 & 0%
\end{array}%
\right) \left(
\begin{array}{ccc}
l_{3} & l_{4} & l \\
-\mu _{3} & -\mu _{4} & -m_{2}%
\end{array}%
\right) \left(
\begin{array}{ccc}
l_{4} & l_{2} & l \\
0 & 0 & 0%
\end{array}%
\right) \frac{(2l+1)^{2}\prod\limits_{i=1}^{4}(2l_{i}+1)}{(4\pi )^{2}}
\end{equation*}%
\begin{equation*}
=:8\left\{
A(m_{1},-m_{1},m_{2},-m_{2})+B(m_{1},-m_{1},m_{2},-m_{2})+C(m_{1},-m_{1},m_{2},-m_{2})\right\}
\text{ .}
\end{equation*}%
For the first term, note first that $(-1)^{m_{1}+m_{2}+\mu _{1}+\mu _{2}+\mu
_{3}+\mu _{4}}\equiv 1,$ because the exponent is necessarily even by the
properties of Wigner's coefficients. Moreover, applying iteratively (\ref%
{unicg})%
\begin{equation*}
\sum_{m_{1}m_{2}}A(m_{1},-m_{1},m_{2},-m_{2})
\end{equation*}%
\begin{equation*}
=\sum_{l_{1}l_{2}l_{3}l_{4}}\sum_{m_{2}\mu _{2}}\sum_{\mu _{3}\mu
_{4}}C_{l_{1}}C_{l_{2}}C_{l_{3}}C_{l_{4}}\left(
\begin{array}{ccc}
l_{1} & l_{2} & l \\
0 & 0 & 0%
\end{array}%
\right) \left(
\begin{array}{ccc}
l_{1} & l_{3} & l \\
0 & 0 & 0%
\end{array}%
\right) \frac{(2l+1)^{2}\prod\limits_{i=1}^{4}(2l_{i}+1)}{(4\pi )^{2}}
\end{equation*}%
\begin{equation*}
\times \left(
\begin{array}{ccc}
l_{4} & l_{3} & l \\
\mu _{4} & \mu _{3} & m_{2}%
\end{array}%
\right) \left(
\begin{array}{ccc}
l_{4} & l_{3} & l \\
0 & 0 & 0%
\end{array}%
\right) \left(
\begin{array}{ccc}
l_{4} & l_{2} & l \\
-\mu _{4} & -\mu _{2} & m_{2}%
\end{array}%
\right) \left(
\begin{array}{ccc}
l_{4} & l_{2} & l \\
0 & 0 & 0%
\end{array}%
\right) \frac{\delta _{\mu _{3}}^{\mu _{2}}\delta _{l_{2}}^{l_{3}}}{2l_{3}+1}
\end{equation*}%
\begin{equation*}
=\sum_{l_{1}l_{2}l_{3}l_{4}}C_{l_{1}}C_{l_{2}}^{2}C_{l_{4}}\left(
\begin{array}{ccc}
l_{1} & l_{2} & l \\
0 & 0 & 0%
\end{array}%
\right) ^{2}\left(
\begin{array}{ccc}
l_{4} & l_{2} & l \\
0 & 0 & 0%
\end{array}%
\right) ^{2}\frac{(2l_{1}+1)(2l_{2}+1)(2l_{4}+1)(2l+1)^{2}}{(4\pi )^{2}}%
\text{..}
\end{equation*}%
Likewise, for the second term we note that $(-1)^{\mu _{1}+\mu _{2}+\mu
_{3}+\mu _{4}}\equiv 1,$ and using (\ref{6j1})%
\begin{equation*}
\sum_{m_{1}m_{2}}(-1)^{m_{1}+m_{2}}B(m_{1},-m_{1},m_{2},-m_{2})
\end{equation*}%
\begin{equation*}
=\sum_{l_{1}l_{2}l_{3}l_{4}}C_{l_{1}}C_{l_{2}}C_{l_{3}}C_{l_{4}}\left\{
\begin{array}{ccc}
l_{1} & l_{3} & l \\
l_{4} & l_{2} & l%
\end{array}%
\right\} \left(
\begin{array}{ccc}
l_{1} & l_{2} & l \\
0 & 0 & 0%
\end{array}%
\right)\times
\end{equation*}
\begin{equation*}
\times \left(
\begin{array}{ccc}
l_{3} & l_{4} & l \\
0 & 0 & 0%
\end{array}%
\right) \left(
\begin{array}{ccc}
l_{1} & l_{3} & l \\
0 & 0 & 0%
\end{array}%
\right) \left(
\begin{array}{ccc}
l_{4} & l_{2} & l \\
0 & 0 & 0%
\end{array}%
\right) \frac{(2l+1)^{2}\prod\limits_{i=1}^{4}(2l_{i}+1)}{(4\pi )^{2}}\text{
.}
\end{equation*}%
Now by Cauchy-Schwartz inequality and recalling that%
\begin{equation*}
\left\vert \left\{
\begin{array}{ccc}
l_{1} & l_{3} & l \\
l_{2} & l_{4} & l%
\end{array}%
\right\} \right\vert \leq \frac{1}{2l+1}\text{ for all }%
l_{1},l_{2},l_{3},l_{4}\text{ ,}
\end{equation*}%
the previous quantity can be bounded by%
\begin{equation*}
\frac{1}{2l+1}%
\sum_{l_{1}l_{2}l_{3}l_{4}}C_{l_{1}}C_{l_{2}}C_{l_{3}}C_{l_{4}}\left(
\begin{array}{ccc}
l_{1} & l_{2} & l \\
0 & 0 & 0%
\end{array}%
\right) \left(
\begin{array}{ccc}
l_{3} & l_{4} & l \\
0 & 0 & 0%
\end{array}%
\right) \left(
\begin{array}{ccc}
l_{1} & l_{3} & l \\
0 & 0 & 0%
\end{array}%
\right) \left(
\begin{array}{ccc}
l_{4} & l_{2} & l \\
0 & 0 & 0%
\end{array}%
\right) \frac{(2l+1)^{2}\prod\limits_{i=1}^{4}(2l_{i}+1)}{(4\pi )^{2}}
\end{equation*}%
\begin{equation*}
\leq \left[ \sum_{l_{1}l_{2}l_{3}l_{4}}C_{l_{1}}C_{l_{2}}C_{l_{3}}C_{l_{4}}%
\left(
\begin{array}{ccc}
l_{1} & l_{2} & l \\
0 & 0 & 0%
\end{array}%
\right) ^{2}\left(
\begin{array}{ccc}
l_{3} & l_{4} & l \\
0 & 0 & 0%
\end{array}%
\right) ^{2}\frac{(2l_{3}+1)(2l_{4}+1)}{4\pi }\frac{%
(2l_{1}+1)(2l_{2}+1)(2l+1)}{4\pi }\right] ^{1/2}
\end{equation*}%
\begin{equation*}
\times \left[
\sum_{l_{1}l_{2}l_{3}l_{4}}C_{l_{1}}C_{l_{2}}C_{l_{3}}C_{l_{4}}\left(
\begin{array}{ccc}
l_{1} & l_{3} & l \\
0 & 0 & 0%
\end{array}%
\right) ^{2}\left(
\begin{array}{ccc}
l_{2} & l_{4} & l \\
0 & 0 & 0%
\end{array}%
\right) ^{2}\frac{(2l_{3}+1)(2l_{4}+1)}{4\pi }\frac{%
(2l_{1}+1)(2l_{2}+1)(2l+1)}{4\pi }\right] ^{1/2}
\end{equation*}%
\begin{equation*}
=\frac{2l+1}{4}C_{l;2}^{2}\text{ ,}
\end{equation*}%
whence%
\begin{equation*}
\left\vert \frac{8}{(2l+1)^{2}C_{l;2}^{2}}%
\sum_{m_{1},m_{2}}B(m_{1},-m_{1},m_{2},-m_{2})\right\vert \leq \frac{2}{2l+1}%
\text{ .}
\end{equation*}%
It is easy to see that $\sum_{m_{1}m_{2}}A(m_{1},-m_{1},m_{2},-m_{2})=%
\sum_{m_{1}m_{2}}C(m_{1},-m_{1},m_{2},-m_{2}).$ In view of \ref{sum0}, \ref%
{sum00}, the statement of the lemma follows easily.
\end{proof}

The proof of Theorem \ref{T : MAINquad} is now concluded by the following
lemma.

\begin{lemma}
If $\beta =0$ and $\alpha >2$, then%
\begin{equation*}
\lim \inf_{l\rightarrow \infty }E\left\{ \widetilde{C}_{l;2}-1\right\}
^{2}\geq C_{2}^{2}\left\{ \frac{c_{2}^{3}}{2c_{1}^{2}\pi }\left\{ 2\zeta
(\alpha -1)+\zeta (\alpha )\right\} 2^{\alpha }\right\} ^{-2}>0\text{.}
\end{equation*}%
If $\beta >0$ and $\alpha $ is real, then%
\begin{equation*}
\lim_{l\rightarrow \infty }E\left\{ \widetilde{C}_{l;2}-1\right\} ^{2}=0%
\text{ .}
\end{equation*}
\end{lemma}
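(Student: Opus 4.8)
The plan is to reduce both assertions to the single ratio $\Sigma(l)/C_{l;2}^{2}$ and then treat the two regimes by, respectively, a lower bound extracted from one term of a positive sum and an upper bound extracted from a Cauchy--Schwarz-type comparison. I would start from the exact identity of Lemma~\ref{sabato},
\[
E\bigl\{\widetilde{C}_{l;2}-1\bigr\}^{2}=\frac{16}{(4\pi)^{2}C_{l;2}^{2}}\,\Sigma(l)+R(l),\qquad 0\le R(l)\le\frac{4}{2l+1},
\]
so that $R(l)\to0$ and the whole question is governed by the manifestly non-negative quantity
\[
\Sigma(l)=\sum_{l_{1}}(2l_{1}+1)C_{l_{1}}^{2}\,G(l_{1},l)^{2},\qquad G(l_{1},l):=\sum_{l_{2}}(2l_{2}+1)C_{l_{2}}\bigl(\begin{smallmatrix}l_{1}&l_{2}&l\\0&0&0\end{smallmatrix}\bigr)^{2}.
\]
It is convenient to record, from the expression for $C_{l;2}$, that $C_{l;2}=\frac{1}{2\pi}\sum_{l_{1}}(2l_{1}+1)C_{l_{1}}G(l_{1},l)$; thus $\Sigma$ and $C_{l;2}$ are built from the same blocks $G(l_{1},l)$, the only difference being that $\Sigma$ squares the profile in $l_{1}$ while $C_{l;2}$ does not.

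For the first assertion ($\beta=0$, $\alpha>2$) I would exploit non-negativity to discard every term except $l_{1}=2$, giving $\Sigma(l)\ge 5\,C_{2}^{2}\,G(2,l)^{2}$. Only $l_{2}\in\{l-2,l,l+2\}$ survive in $G(2,l)$, and since $\sum_{l_{2}}(2l_{2}+1)\bigl(\begin{smallmatrix}2&l_{2}&l\\0&0&0\end{smallmatrix}\bigr)^{2}=1$, Condition~(\ref{Eq classe D}) of Definition~\ref{D : la classe D} yields $G(2,l)\ge\kappa\,C_{l}$ for some $\kappa>0$. On the other hand Lemma~\ref{L : zeta}, together with $C_{l/2}\le(c_{2}/c_{1})2^{\alpha}C_{l}$, gives the upper bound $C_{l;2}\le D\,C_{l}$ with $D=\frac{c_{2}^{3}}{2c_{1}^{2}\pi}\{2\zeta(\alpha-1)+\zeta(\alpha)\}2^{\alpha}$. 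Substituting both estimates, the factor $C_{l}^{2}$ cancels and one is left with a strictly positive constant, of the form $C_{2}^{2}D^{-2}$ up to the numerical prefactor, lower-bounding $E\{\widetilde C_{l;2}-1\}^{2}$ for all large $l$; this is exactly the stated $\liminf$ bound.

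For the second assertion ($\beta>0$) the goal is $M(l):=\frac{16}{(4\pi)^{2}C_{l;2}^{2}}\Sigma(l)\to0$, i.e.\ $\Sigma(l)=o(C_{l;2}^{2})$. The natural first move is the elementary bound $\Sigma(l)\le\bigl(\sup_{l_{1}}C_{l_{1}}G(l_{1},l)\bigr)\sum_{l_{1}}(2l_{1}+1)C_{l_{1}}G(l_{1},l)=\bigl(\sup_{l_{1}}C_{l_{1}}G(l_{1},l)\bigr)\,2\pi C_{l;2}$, whence
\[
M(l)\le\frac{2}{\pi}\,\frac{\sup_{l_{1}}C_{l_{1}}G(l_{1},l)}{C_{l;2}},
\]
reducing the problem to showing that no single frequency $l_{1}$ carries a non-vanishing share of the convolution $C_{l;2}$.

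\textbf{This last step is the main obstacle}, and it is genuinely delicate because the exponential scale factors out of the comparison: $G(l_{1},l)$ is dominated by $C_{l-l_{1}}\sim e^{-\beta(l-l_{1})}$, so $C_{l_{1}}G(l_{1},l)\sim e^{-\beta l}$ roughly uniformly in $l_{1}$, and the very same $e^{-\beta l}$ already sits inside $C_{l;2}$. The decay of the ratio must therefore be extracted entirely from the polynomial prefactors and from the spreading of the Clebsch--Gordan mass $(2l_{2}+1)\bigl(\begin{smallmatrix}l_{1}&l_{2}&l\\0&0&0\end{smallmatrix}\bigr)^{2}$ over a band of scales growing with $l$. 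Making this precise requires the sharp asymptotics of the $3j$ symbols in both the stretched regime ($l_{2}\approx|l-l_{1}|$) and the near-diagonal regime ($l_{2}\approx l$); it is here that the technical heart of the argument lies, and here that the mechanism distinguishing $\beta>0$ from $\beta=0$ must be located. Once $\sup_{l_{1}}C_{l_{1}}G(l_{1},l)/C_{l;2}\to0$ is established, combining it with $R(l)\to0$ closes the proof.
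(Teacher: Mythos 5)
Your treatment of the first assertion is correct and is essentially the paper's own argument: starting from Lemma~\ref{sabato}, you retain the single $l_{1}=2$ term of the manifestly positive sum, use the parity/triangle constraints and the unitarity relation to get $G(2,l)\geq \kappa C_{l}$, and divide by the upper bound $C_{l;2}\leq D\,C_{l}$ extracted from Lemma~\ref{L : zeta}; the paper does exactly this (with $C_{l+2}$ playing the role of your $\kappa C_{l}$, and with the same loose tracking of numerical prefactors).

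For the second assertion, however, there is a genuine gap, and you flag it yourself. Your reduction to
\begin{equation*}
M(l)\leq \frac{2}{\pi }\,\frac{\sup_{l_{1}}C_{l_{1}}G(l_{1},l)}{C_{l;2}}
\end{equation*}
is in fact precisely the paper's penultimate display: with $\Gamma _{a}=(2a+1)C_{a}$ and $\left\{ C_{l_{1}0l_{3}0}^{l0}\right\} ^{2}=(2l+1)\left( \begin{smallmatrix} l_{1} & l_{3} & l \\ 0 & 0 & 0 \end{smallmatrix} \right) ^{2}$, one checks that
\begin{equation*}
\frac{\sup_{l_{1}}(2l_{1}+1)^{-1}\sum_{l_{3}}\Gamma _{l_{1}}\Gamma
_{l_{3}}\left\{ C_{l_{1}0l_{3}0}^{l0}\right\} ^{2}}{\sum_{l_{1}l_{3}}\Gamma
_{l_{1}}\Gamma _{l_{3}}\left\{ C_{l_{1}0l_{3}0}^{l0}\right\} ^{2}}=\frac{
\sup_{l_{1}}C_{l_{1}}G(l_{1},l)}{2\pi C_{l;2}}\text{ ,}
\end{equation*}
so you have reached exactly the quantity the paper reaches. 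But you then stop, declaring that the convergence of this ratio to zero for $\beta >0$ is ``the main obstacle'' requiring sharp uniform asymptotics of the $3j$ symbols; as written, the $\beta >0$ half of the lemma is therefore asserted rather than proved. The paper closes this step not by new analysis but by recognition: the displayed ratio is precisely the sufficient condition for high-frequency Gaussianity of \cite[Proposition 9]{MPBER} --- in the Markov-chain language of that reference, $\sup P(Z_{1}=l_{1}\mid Z_{2}=l_{2})\rightarrow 0$ --- and its validity for spectra with $\beta >0$ is already verified in \cite[Section 6]{MPBER}, the same result that yields the equivalence (i)$\Leftrightarrow$(iii) invoked at the start of the proof of Theorem~\ref{T : MAINquad}. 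So the missing ingredient is not a finer Wigner-symbol estimate but the observation that your terminal condition coincides with a condition already established in the exponential regime; absent that citation (or an independent verification, which would indeed demand the kind of uniform $3j$ estimates you allude to), your proof of the second assertion is incomplete.
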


\begin{proof}
For the first part, from Lemma \ref{sabato} we can focus on%
\begin{equation*}
\frac{1}{C_{l;2}^{2}}\sum_{l_{1}l_{2}l_{3}}C_{l_{1}}^{2}C_{l_{2}}C_{l_{3}}%
\left(
\begin{array}{ccc}
l_{1} & l_{2} & l \\
0 & 0 & 0%
\end{array}%
\right) ^{2}\left(
\begin{array}{ccc}
l_{1} & l_{3} & l \\
0 & 0 & 0%
\end{array}%
\right) ^{2}\frac{(2l_{1}+1)(2l_{2}+1)(2l_{3}+1)}{(4\pi )^{2}}
\end{equation*}%
\begin{equation*}
=\frac{1}{C_{l;2}^{2}}%
\sum_{l_{1}l_{2}}(2l_{1}+1)(2l_{2}+1)C_{l_{1}}C_{l_{2}}\left(
\begin{array}{ccc}
l_{1} & l_{2} & l \\
0 & 0 & 0%
\end{array}%
\right) ^{2}\sum_{l_{3}}C_{l_{1}}C_{l_{3}}\left(
\begin{array}{ccc}
l_{1} & l_{3} & l \\
0 & 0 & 0%
\end{array}%
\right) ^{2}\frac{(2l_{3}+1)}{(4\pi )^{2}}\text{ ,}
\end{equation*}%
which is larger than%
\begin{equation*}
\frac{1}{C_{l;2}^{2}}\sum_{l_{2}}(2l_{2}+1)C_{2}C_{l_{2}}\left(
\begin{array}{ccc}
2 & l_{2} & l \\
0 & 0 & 0%
\end{array}%
\right) ^{2}\sum_{l_{3}}C_{2}C_{l_{3}}\left(
\begin{array}{ccc}
2 & l_{3} & l \\
0 & 0 & 0%
\end{array}%
\right) ^{2}\frac{(2l_{3}+1)}{(4\pi )^{2}}
\end{equation*}
\begin{equation*}
\geq \frac{C_{2}^{2}C_{l+2}^{2}}{C_{l;2}^{2}}\sum_{l_{2}}(2l_{2}+1)\left(
\begin{array}{ccc}
2 & l_{2} & l \\
0 & 0 & 0%
\end{array}%
\right) ^{2}\sum_{l_{3}}\left(
\begin{array}{ccc}
2 & l_{3} & l \\
0 & 0 & 0%
\end{array}%
\right) ^{2}\frac{(2l_{3}+1)}{(4\pi )^{2}}=\frac{C_{2}^{2}C_{l+2}^{2}}{%
C_{l;2}^{2}}\text{ .}
\end{equation*}%
Now we have proved earlier that in the polynomial case, $C_{l;2}\simeq
C_{l}\simeq l^{-\alpha },$ so the previous ratio does not converge to zero
and $\widehat{C}_{l;2}$ cannot be ergodic; the lower bound provided in the
statement of the Lemma follows from previous computations and easy
manipulations.

For the second part of the statement, it is sufficient to note that
\begin{eqnarray*}
&&\frac{1}{C_{l;2}^{2}}%
\sum_{l_{1}l_{2}}(2l_{1}+1)(2l_{2}+1)C_{l_{1}}C_{l_{2}}\left(
\begin{array}{ccc}
l_{1} & l_{2} & l \\
0 & 0 & 0%
\end{array}%
\right) ^{2}\sum_{l_{3}}C_{l_{1}}C_{l_{3}}\left(
\begin{array}{ccc}
l_{1} & l_{3} & l \\
0 & 0 & 0%
\end{array}%
\right) ^{2}\frac{(2l_{3}+1)}{(4\pi )^{2}} \\
&\leq &\frac{\sup_{l_{1}}(2l_{1}+1)^{-1}\sum_{l_{3}}\Gamma _{l_{1}}\Gamma
_{l_{3}}\left\{ C_{l_{1}0l_{3}0}^{l0}\right\} ^{2}}{\sum_{l_{1}l_{3}}\Gamma
_{l_{1}}\Gamma _{l_{3}}\left\{ C_{l_{1}0l_{3}0}^{l0}\right\} ^{2}}\leq \frac{%
\sup_{l_{1}}\sum_{l_{3}}\Gamma _{l_{1}}\Gamma _{l_{3}}\left\{
C_{l_{1}0l_{3}0}^{l0}\right\} ^{2}}{\sum_{l_{1}l_{3}}\Gamma _{l_{1}}\Gamma
_{l_{3}}\left\{ C_{l_{1}0l_{3}0}^{l0}\right\} ^{2}}\text{ ,}
\end{eqnarray*}%
so the condition is met, just as for the standard case of \cite{MPBER}.
\end{proof}

\bigskip

\noindent \textbf{Remarks. }(1)\textbf{\ }By inspection of the previous
proof, we note that we have shown how the sufficient condition for
asymptotic Gaussianity (HFG) is also such for ergodicity (HFE). More
precisely, we have proved that
\begin{equation*}
\lim_{l\rightarrow \infty }\sup_{l_{1}}\frac{\sup_{l_{1}}\sum_{l}\Gamma
_{l_{1}}\Gamma _{l_{2}}\left\{ C_{l_{1}0l_{2}0}^{l0}\right\} ^{2}}{%
\sum_{l_{1}l_{2}}\Gamma _{l_{1}}\Gamma _{l_{2}}\left\{
C_{l_{1}0l_{2}0}^{l0}\right\} ^{2}}=\lim_{l\rightarrow \infty }\sup_{\lambda
}P(Z_{1}=l_{1}%
{\vert}%
Z_{2}=l_{2})=0\text{ ,}
\end{equation*}%
where $\left\{ Z_{l}\right\} $ is the Markov chain defined in \cite[Eq. (57)
and (58)]{MPBER} is a sufficient condition for the HFG (see %
\cite[Proposition 9]{MPBER}) and also a sufficient condition to have $%
\lim_{l\rightarrow \infty }E\left\{ \widetilde{C}_{l}-1\right\} ^{2}=0$ .

(2) In principle, the case $q=3$ can be dealt along similar lines.

(3) (\emph{On Cosmic Variance}) Loosely speaking, the epistemological status
of Cosmological research has always been the object of some debate, as in
some sense we are dealing with a science based on a single observation (our
observed Universe). In the CMB community, this issue has been somewhat
rephrased in terms of so-called \emph{Cosmic Variance - }i.e., it taken as
common knowledge that parameters relating only to lower multipoles (such as
the value of $C_{l}$, for small values of $l$) are inevitably affected by an
intrinsic uncertainty which cannot be eliminated (the variability due to the
peculiar realization of the random field that we are able to observe),
whereas this effect is taken to disappear at higher $l$ (implicitly assuming
that something like the $HFE$ should always hold). Our result seem to point
out, apparently for the first time, the very profound role that the
assumption of Gaussianity may play in this environment. In particular, for
general non-Gaussian fields there is no guarantee that angular power spectra
and related parameters can be consistently estimated, even at high
multipoles - i.e., the Cosmic Variance does not decrease at high frequencies
for general non-Gaussian models.

\section{Discussion and directions for further research}

This paper leaves many directions open for further research. We believe the
results of the previous two sections point out a very strong connection
between conditions for High Frequency Ergodicity (HFE) and High Frequency
Gaussianity (HFG) for isotropic spherical random fields. It is natural to
suggest that equivalence may hold for Gaussian subordinated fields of any
order $q,$ or even more broadly for general Gaussian subordinated fields on
homogeneous spaces of compact groups. Indeed, in this broader framework it
is shown in \cite{BaMaVa} that independence of Fourier coefficients implies
Gaussianity, which is the heuristic rationale behind our results here.

The connection between the HFE property can also be studied under a
different environment than Gaussian subordination. Consider for instance the
class of completely random spherical fields, which was recently introduced
in \cite{dennis04,dennis05}. Following the definition therein, we shall say
that a spherical random field is completely random if for each $l$ we have
that the vector $a_{l.}=(a_{l,-l},...,a_{ll})$ is invariant with respect to
the action of all matrices belonging to $SU(2l+1)$ and verifies $%
a_{lm}=(-1)^{m}\overline{a}_{lm}.$ Because of this, the vector $a_{l.}$ is
clearly uniformly distributed on the manifold of random diameter $\sum
|a_{lm}|^{2}=\widehat{C}_{l},$ or equivalently, introducing the $(2l+1)$
vector $U_{l}$
\begin{equation}
U_{l}=\frac{1}{\sqrt{2l+1}}\left\{ \frac{\sqrt{2}{\rm Re}\,a_{l1}}{\sqrt{%
\widehat{C}_{l}}},\frac{\sqrt{2}{\rm Re}\,a_{l2}}{\sqrt{\widehat{C}_{l}}}%
,....,\frac{a_{l0}}{\sqrt{\widehat{C}_{l}}},\frac{\sqrt{2}{\rm Im}\,a_{l1}}{%
\sqrt{\widehat{C}_{l}}},...,\frac{\sqrt{2}{\rm Im}\,a_{ll}}{\sqrt{\widehat{C}%
_{l}}}\right\}  \label{fut0}
\end{equation}%
it holds that, for $l$ large, it holds approximately that $U_{l}\overset{law}%
{\sim }U(S^{2l}),$ i.e. $U_{l}$ it is asymptotically distributed on the unit
sphere of $\mathbb{R}^{2l+1}.$ Under these conditions, it is simple to show
that $HFE\Rightarrow HFG,$ i.e.
\begin{equation*}
\left\{ \lim_{l\rightarrow \infty }E\left\{ \widetilde{C}_{l}-1\right\}
^{2}=0\right\} \Rightarrow \left\{ \frac{T_{l}(x)}{\sqrt{Var(T_{l})}}\overset%
{law}{\rightarrow }N(0,1)\text{ , as }l\rightarrow \infty \right\} \text{ .}
\end{equation*}%
Indeed, it is sufficient to note that, as before
\begin{equation*}
\frac{T_{l}(x)}{\sqrt{Var(T_{l})}}=\frac{T_{l}}{\sqrt{(2l+1)C_{l}}}\overset{%
law}{=}\frac{\sqrt{4\pi }a_{l0}}{\sqrt{C_{l}}}\text{ ,}
\end{equation*}%
which we can write as
\begin{equation*}
\frac{a_{l0}}{\sqrt{C_{l}}}=\frac{a_{l0}}{\sqrt{\widehat{C}_{l}}}\sqrt{\frac{%
\widehat{C}_{l}}{C_{l}}}=\frac{a_{l0}}{\sqrt{\widehat{C}_{l}}}\sqrt{%
\widetilde{C}_{l}}\text{ .}
\end{equation*}%
Now, as $l\rightarrow \infty $
\begin{equation*}
\frac{a_{l0}}{\sqrt{\widehat{C}_{l}}}\overset{law}{\rightarrow }N(0,1)\text{
,}
\end{equation*}%
because the left hand side can be viewed as the marginal distribution for a
uniform law on a sphere of growing dimension; the latter is asymptotically
Gaussian, as a consequence of Poincar\'{e} Lemma (see \cite{diadif}). We do
not investigate this issue more fully here, and we leave for future research
the determination of general conditions such that (compare with (\ref{fut0}))%
\begin{equation}
\text{the law of }U_{l}\text{ and }U(S^{2l})\text{ are asymptotically close
as }l\rightarrow \infty \text{ .}  \label{fut}
\end{equation}

Obviously, for all fields such that (\ref{fut}) holds (i.e. those that are
asymptotically completely random, to mimic the terminology of \cite%
{dennis04,dennis05}), by the same argument as before we have that
\begin{equation*}
\left\{ \sqrt{\frac{\widehat{C}_{l}}{C_{l}}}\rightarrow _{prob}1\right\}
\Rightarrow \left\{ \frac{T_{l}}{\sqrt{(2l+1)C_{l}}}\overset{law}{%
\rightarrow }N(0,1)\right\} \text{ . }
\end{equation*}%
To conclude this work, we wish to provide an example were the HFE and HFG
property are indeed not equivalent. Consider the (anisotropic) field%
\begin{equation*}
h(x)=\sum_{lm}\xi _{lm}Y_{lm}(x)\text{ , where }\xi _{lm}=\left\{
\begin{array}{c}
\xi _{l}\text{ , for }m=0 \\
0\text{ , otherwise}%
\end{array}%
\right. ,
\end{equation*}%
and the random variables $\xi _{l}$ verifies the assumption%
\begin{equation*}
E\xi _{l}=0\text{, }\sum_{l}E\xi _{l}^{2}<\infty \text{ \ \ and \ }E\xi
_{l}^{4}<\infty \text{ .}
\end{equation*}%
Note that, in the definition of $h\left( x\right) $, the sum is not taken
with respect to $l$. The field can be made isotropic by taking a random
rotation $T(x)=h(gx),$ where $g$ is a random, uniformly distributed element
of $SO(3)$. We have as usual $T(x)=\sum_{l}\sum_{m=-l}^{l}a_{lm}Y_{lm}(x)$ ,
where%
\begin{equation*}
a_{lm}\overset{law}{=}\sum_{m^{\prime }=-l}^{l}D_{m^{\prime }m}^{l}(g)\xi
_{lm^{\prime }}\overset{law}{=}\sqrt{\frac{4\pi }{2l+1}}Y_{lm}(g)\xi _{l}%
\text{ ,}
\end{equation*}%
and where $\left\{ D^{l}(g)\right\} $ denotes the well-known Wigner
representation matrices for $SO(3),$ and the first identity in law is
discussed for instance in \cite{BaMa}, \cite{MPJMVA}. Note that%
\begin{equation*}
\sum_{m=-l}^{l}|a_{lm}|^{2}=\frac{4\pi }{2l+1}\xi
_{l}^{2}\sum_{m=-l}^{l}\left\vert Y_{lm}(g)\right\vert ^{2}=\xi _{l}^{2}%
\text{ ,}
\end{equation*}%
as expected, because the sample angular power spectrum is invariant to
rotations. Of course in this case we do not have ergodicity in general, i.e.
it may happen that%
\begin{equation*}
\frac{\sum_{m=-l}^{l}|a_{lm}|^{2}}{E\sum_{m=-l}^{l}|a_{lm}|^{2}}=\frac{\xi
_{l}^{2}}{E\xi _{l}^{2}}\nrightarrow 1
\end{equation*}%
and indeed for general sequences $\left\{ \xi _{l}\right\} $
\begin{equation*}
E\left\{ \frac{\xi _{l}^{2}}{E\xi _{l}^{2}}-1\right\} ^{2}=E\left\{ \frac{%
\xi _{l}^{2}}{E\xi _{l}^{2}}\right\} ^{2}-1\neq 0\text{ .}
\end{equation*}%
However, in the special case where%
\begin{equation*}
\xi _{l}=\left\{
\begin{array}{l}
e^{-l}\text{ with probability }\frac{1}{2} \\
-e^{-l}\text{ with probability }\frac{1}{2}%
\end{array}%
\right. \text{ ,}
\end{equation*}%
we obtain easily that $E\left\{ \frac{\xi _{l}^{2}}{E\xi _{l}^{2}}-1\right\}
^{2}\equiv 0,$ while asymptotic Gaussianity fails. Hence, we have
constructed an example where the HFE property holds but the HFG property
does not. Note that the support of the vector $\left\{ a_{l.}\right\} $ is
concentrated on a small subset of the sphere $S^{2l}$; heuristically, this
is what prevents Poincar\`{e}-like arguments to go through.

\bigskip

\end{document}